\newcounter{defcounter}
\theoremstyle{plain}
\newtheorem{theorem}{Theorem}
\newtheorem{proposition}[theorem]{Proposition}
\newtheorem{corollary}[theorem]{Corollary}
\newtheorem{lemma}[theorem]{Lemma}
\newtheorem{proposition.definition}[theorem]{Proposition/Definition}
\newtheorem{theoremalpha}{Theorem}
\newtheorem{corollaryalpha}[theoremalpha]{Corollary}
\newtheorem{propositionalpha}[theoremalpha]{Proposition}
\theoremstyle{definition}
\newtheorem{remark}[theorem]{Remark}
\newtheorem{example}[theorem]{Example}
\newcommand{\lra}{\longrightarrow}
\newcommand{\noi}{\noindent}
\newcommand{\PP}{\mathbf{P}}
\newcommand{\CC}{\mathbf{C}}
\newcommand{\Zero}[1]{\textnormal{Zeroes} ({#1})}
\newcommand{\OO}{\mathcal{O}}
\newcommand{\cH}{\mathcal{H}}
\newcommand{\II}{\mathcal{I}}
\newcommand{\UU}{\mathcal{U}}
\newcommand{\FF}{\mathcal{F}}
\newcommand{\frb}{\mathfrak{b}}
\newcommand{\frakm}{\mathfrak{m}}
\newcommand{\eps}{\varepsilon}
\newcommand{\rk} {\text{rank }}
\newcommand{\Image}{\textnormal{Im}}
\newcommand{\image}{\textnormal{im}}
\newcommand{\HH}[3]{H^{{#1}} \big( {#2} , {#3}
\big) }
\newcommand{\HHH}[3]{H^{{#1}} \Big( {#2} \, , \, {#3}
\Big) }
\newcommand{\HHHH}[4]{H_{{#1}}^{{#2}} \big( {#3} \, , \, {#4}
\big) }
\newcommand{\MI}[1]{\mathcal{J} \big( {#1}
\big) }
\newcommand{\ord}{\textnormal{ord}}
\newcommand{\codim}{\textnormal{codim}}
\newcommand{\pr}{\prime}
\newcommand{\Bl}{\text{Bl}}
\newcommand{\reg}{\textnormal{reg}}
\newcommand{\EEE}{\mathbf{E}}
\newcommand{\Sym}{\textnormal{Sym}}
\newcommand{\satt}[1]{{#1}^{\textnormal{sat}}}
\newcommand{\satdeg}{\textnormal{sat. \!deg}}
\newcommand{\shext}{\mathcal{E}\mathit{xt}}
\newcommand{\shom}{\mathcal{H}\mathit{om}}
\numberwithin{theorem}{section}
\begin{document}

\title{Saturation  bounds for smooth varieties}
 \author{Lawrence Ein}
 \address{Department of Mathematics, University of Illinois at Chicago, 851 South Morgan St., Chicago, IL  60607}
  \email{{\tt ein@uic.edu}}
  \thanks{Research of the first author partially supported by NSF grant DMS-1801870.}
 
\author{Huy T\`ai H\`a}
\address{Tulane University \\ Department of Mathematics \\
                6823 St. Charles Ave. \\ New Orleans, LA 70118, USA}
\email{tha@tulane.edu}
  \thanks{Research  of the second author partially supported by  Louisiana Board of Regents (grant \# LEQSF(2017-19)-ENH-TR-25).}
 
 \author{Robert Lazarsfeld}
  \address{Department of Mathematics, Stony Brook University, Stony Brook, New York 11794}
 \email{{\tt robert.lazarsfeld@stonybrook.edu}}
 \thanks{Research  of the third author partially supported by NSF grant DMS-1739285.}

\maketitle

 \section*{Introduction}
 
The purpose of this paper is to prove some saturation bounds for the ideals of non-singular complex projective varieties and their powers.

We begin with some background. Consider the polynomial ring  $S = \CC[x_0, \ldots, x_r]$  in $r+1$ variables, and fix homogeneous polynomials
\[f_0\, , \,  f_1 \, , \, \ldots \, , \, f_p\, \in \, S \ \ \text{with\ \  $\deg(f_i) = d_i$}.\] We assume that
$d_0  \ge  d_1  \ge   \ldots  \ge  d_p$, 
and we denote by 
\[ J \ = \ \big ( \, f_0 \, , \, f_1 \, , \, \ldots \, ,\, f_p \, \big) \ \subseteq \ S \]
the ideal that the polynomials span. Suppose now that $J$ is primary for the irrelevant maximal ideal $\frakm = (x_0,  \ldots, x_r)$, or equivalently that $\dim_\CC S/J  < \infty$. In this case $J$ contains all monomials of sufficiently large degree, and it is a classical theorem of Macaulay \cite[Theorem 7.4.1]{CM-S.P} that 
\begin{equation} \label{Macaulay.Eqn.1}
J_t \ = \ S_t \ \ \text{ for } \ \ t\,  \ge \, d_0 + \ldots + d_r - r. 
\end{equation}
Moreover this bound is (always) sharp when $p = r$. 
Although less well known,  a similar statement holds for powers of $J$:
\begin{equation} \label{Macaulay.Eqn.2}
(J^a)_t \ = \ S_t \ \ \text{ for } \ \ t\,  \ge \, ad_0 +d_1 + \ldots + d_r - r.\end{equation}
This again is always sharp when $ p = r$. 

It is natural to ask whether there are analogous results for more general homogeneous ideals $J$, in particular when 
\[    X \ =_{\text{def}} \ \Zero{J} \ \subseteq \ \PP^r \]
is a smooth complex projective variety. Of course if $J$ has non-trivial zeroes, then it does not contain any power of the maximal ideal. However if one interprets \eqref{Macaulay.Eqn.1} and \eqref{Macaulay.Eqn.2} as saturation bounds, then the question makes sense more generally. Specifically, recall that the \textit{saturation} of a homogeneous ideal $J$ is defined by
\begin{equation}  \satt{J}  \ = \ \big \{ \, f \in S \mid \frakm^k \cdot f \subseteq J     \text{ for some $k \ge 0$} \, \big \}. \notag \end{equation}
The quotient $ \satt{J} / J$ has finite length, and in particular
\[ ( \satt{J} )_t \ = \ J_t \ \ \text{ for } \ t \gg 0. \]
The least such integer $t$ is called the \textit{saturation degree} $\satdeg(J)$ of $J$. Observing that $\satt{J} = S$ if and only if $J$ is $\frakm$-primary,  statements \eqref{Macaulay.Eqn.1} and \eqref{Macaulay.Eqn.2} are equivalent to estimates for the saturation degrees of $J$ and $J^a$. So the problem becomes to   bound the saturation degree of an ideal in terms of the degrees of its generators.

It is instructive to consider some examples. Let $X \subseteq \PP^r$ be a hyperplane defined by a linear form $\ell \in S$, and set
\begin{equation} \label{Hyperplane.Example}  f_i \, = \, x_i^{d-1}\cdot \ell \ \  \ , \  \ \ J = (f_0, \ldots, f_r)\, \subseteq \, S. \end{equation}
Then $\satt{J} = (\ell)$, and it follows from Macaulay's theorem that
\[  \satdeg(J) \ = \ (r+1)(d-1) - r + 1 \ = \ (r+1)d - 2r, \]
which is very close to the bound \eqref{Macaulay.Eqn.1}. On the other hand, it is not the case that the saturation degree  of an arbitrary ideal is bounded linearly in the degrees of its generators. For instance, 
 the ideals \[  J \ = \ \big( x^d, y^d, xz^{d-1} - yw^{d-1} \big) \ \subseteq \ \CC[x,y,z,w] \]
considered by Caviglia \cite[Example 4.2.1]{Caviglia} have  $\satdeg(J) \approx d^2$.

Our first main result asserts that for ideals defining smooth varieties, the Macaulay bounds remain true without modification.
\begin{theoremalpha} \label{Intro.Sat.Deg.Thm}
As above, suppose that 
\[ J \ = \ \big ( \, f_0 \, , \, f_1 \, , \, \ldots \, ,\, f_p \, \big) \ \subseteq \ S \]
is generated by forms of degrees $d_0 \ge \ldots \ge d_p$, and assume that the projective scheme
\[    X \ =_{\text{def}} \ \Zero{J} \ \subseteq \ \PP^r \]
cut out by the $f_i$ is a non-singular complex variety. Then 
$ \satdeg(J)   \le  d_0 + \ldots + d_r - r$,  
and more generally
\begin{equation} \label{Intro.Thm.Equation} \satdeg(J^a) \ \le \ ad_0 + d_1 + \ldots + d_r - r. \end{equation}
\end{theoremalpha}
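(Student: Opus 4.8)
The plan is to translate the saturation-degree statement into a cohomological vanishing statement and then prove that vanishing using the smoothness of $X$. First I would recall the standard dictionary: if $\II = \II_X \subseteq S$ is the (saturated) ideal sheaf of the scheme $X = \Zero{J}$, then controlling $\satdeg(J^a)$ amounts to showing that the natural map $(J^a)_t \to H^0(\PP^r, \II_X^{(a)}(t))$ (or the relevant ideal-power sheaf) is surjective for $t \ge ad_0 + d_1 + \cdots + d_r - r$, together with the vanishing $H^1(\PP^r, \mathcal{F}(t)) = 0$ in the same range, where $\mathcal{F}$ is the sheafification of $J^a$. So the whole problem is to produce a Castelnuovo--Mumford--type regularity bound for the powers $J^a$ from a resolution-theoretic or vanishing-theoretic input that sees the degrees $d_0 \ge \cdots \ge d_p$.

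The key geometric device I would use is the \emph{Koszul complex} (or its Buchsbaum--Rim / Eagon--Northcott generalizations) built from the generators $f_0, \ldots, f_p$. Since $X$ is smooth, the $f_i$ fail to form a regular sequence in a controlled way: the Jacobian criterion gives us that the only associated primes of $J$ off $X$ are concentrated where the differentials degenerate, and smoothness forces these to behave well. Concretely, I would consider the surjection $\bigoplus_{i=0}^p \OO_{\PP^r}(-d_i) \to \II_X \cdot \OO_{\PP^r}$ and analyze the Koszul cohomology sheaves. The hypothesis $d_0 \ge d_1 \ge \cdots \ge d_p$ means the top degree $d_0$ appears only once in the power $J^a$ (giving the coefficient $a d_0$), while the remaining $r$ degrees $d_1, \ldots, d_r$ enter linearly; this asymmetry is exactly what the Macaulay-type bound $ad_0 + d_1 + \cdots + d_r - r$ encodes, so the combinatorics of the bound should emerge from tracking the degrees of the Koszul terms that contribute to $H^1$.

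The main technical step — and I expect this to be the principal obstacle — is establishing the vanishing for the \emph{powers} $J^a$ rather than $J$ itself. For $J$ alone one can hope to argue via a regularity estimate on $\II_X$ coming from the generator degrees and the smoothness (e.g. a Bertini-type reduction cutting $X$ down by generic hyperplane sections to reduce $r$, eventually landing in the $\frakm$-primary case where Macaulay's theorem \eqref{Macaulay.Eqn.1} applies directly). Passing to $J^a$ requires understanding how saturation degree grows under taking powers, and the naive bound from multiplying regularities would give $a(d_0 + \cdots + d_r - r)$, which is far weaker than \eqref{Intro.Thm.Equation}. The improvement to the asymmetric bound $ad_0 + d_1 + \cdots + d_r - r$ must come from a more refined filtration: I would filter $J^a / J^{a-1} J^{\sat}$-type pieces, or use the symmetric/Rees algebra structure, isolating the single copy of $d_0$.

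My concrete strategy would therefore be an induction on $r$ via generic hyperplane sections: restrict to a general hyperplane $H$, so that $X \cap H \subseteq H = \PP^{r-1}$ is again smooth, the generator degrees are preserved, and the inductive bound reads $ad_0 + d_1 + \cdots + d_{r-1} - (r-1)$; then compare $\satdeg(J^a)$ with $\satdeg((J^a)|_H)$ using the exact sequence $0 \to S(-1) \xrightarrow{\ell} S \to S/(\ell) \to 0$ tensored with the relevant module, so that the drop in dimension contributes exactly the increment $d_r - 1$ needed to pass from the $(r-1)$-bound to the $r$-bound. The base case would be a small-dimensional or $\frakm$-primary situation handled by Macaulay's theorems \eqref{Macaulay.Eqn.1} and \eqref{Macaulay.Eqn.2}. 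The delicate points to verify are that general hyperplane sections preserve smoothness and the generation data (standard, via Bertini), and that the $H^1$ comparison across the hyperplane restriction is sharp enough not to lose the coefficient asymmetry — this sharpness, I anticipate, is where smoothness of $X$ is used most essentially, likely through a vanishing theorem (Kodaira--Nakano or a Nadel-type statement) applied to the smooth variety to kill the obstructing cohomology in the required degree range.
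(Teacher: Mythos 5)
Your opening reduction---saturation of $J^a$ in degree $t$ is equivalent to surjectivity of $H^0$ of a map of sheaves---is indeed the paper's first step, and your instinct that a Koszul-type complex built from the generators must produce the asymmetric bound is sound. But the concrete engine you commit to, induction on $r$ by generic hyperplane sections, has a gap that I do not believe can be repaired, because it is circular at exactly the quantity being bounded. For a general linear form $\ell$, multiplication by $\ell$ on $S/J^a$ is injective if and only if $J^a$ is already saturated: its kernel $(J^a:\ell)/J^a$ is contained in $H^0_{\frakm}(S/J^a) = \satt{(J^a)}/J^a$ and is nonzero whenever that module is nonzero, i.e.\ whenever $\satdeg(J^a)>0$. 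So the exact sequence comparing $S/J^a$ with its restriction $S/(J^a+(\ell))$ requires, as input, control of the saturation quotient you are trying to bound; sheaf-theoretic (geometric) regularity does restrict well to hyperplane sections, but $H^0_{\frakm}$---the saturation part---is precisely what does not. This is why arithmetic (Eisenbud--Goto) regularity is subtler than geometric regularity, and why, as the paper notes, published attempts to deduce saturation bounds along such lines (e.g.\ Proposition 2.2 of \cite{AV}) are erroneous. Two further symptoms of the same problem: restriction by a linear form preserves the generator degrees $d_0,\ldots,d_p$ and shifts degrees by $1$, so there is no mechanism that could produce your claimed increment of $d_r-1$ per section; and Caviglia's example $\satdeg(J)\approx d^2$ has hyperplane restrictions with the very same generator degrees, so any correct argument must use the smoothness of $X$ quantitatively inside the inductive step---which your proposal defers to an unspecified ``vanishing theorem,'' and that deferred step is the entire difficulty.

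For comparison, the paper's route is global, not inductive: blow up $X$, so that the surjection $\oplus\,\OO_{\PP^r}(-d_i)\to\II_X$ pulls back to a surjection of \emph{vector bundles} $U'\to\OO_{\PP'}(-\EEE)$; form the Buchsbaum--Eisenbud symmetric power of the Koszul complex, whose terms are the Schur powers $S^{a,1^k}(U')\otimes\OO_{\PP'}(tH+k\EEE)$. This complex is exact because the map is surjective on the blow-up (on $\PP^r$ itself it is exact only off $X$), and the identity $\reg\big(S^{a,1^k}(U)\big)=ad_0+d_1+\cdots+d_k$ is the true source of the asymmetric bound, with the factor $a$ landing on $d_0$ alone. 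The proof then reduces to the vanishing $\HH{i}{\PP'}{\OO_{\PP'}(\ell H+i\EEE)}=0$ for $\ell\ge -r$, $i>0$, which is established by computing $R\mu_*$ (the only nonzero higher direct image, $R^{e-1}\mu_*$, is filtered by the sheaves $S^kN\otimes\det N\otimes\OO_X(\ell)$) and applying Nakano vanishing on $X$ to symmetric powers of the normal bundle. So smoothness enters through the geometry of the blow-up and a normal-bundle vanishing theorem, not through hyperplane sections.
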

\noi (If $p < r$, one takes $d_{p+1} = \ldots = d_r = 0$.) We do not know whether the stated bound is best possible, but in any event it is asymptotically sharp. Indeed, if $J$ is the ideal considered in \eqref{Hyperplane.Example}, then the Theorem predicts that $\satdeg(J^a)\le (a+r)d -r$, whereas in fact $\satdeg(J^a) = (a+r)d - 2r$.

Given a reduced algebraic set $X \subseteq \PP^r$ denote by $I_X \subseteq S$ the saturated homogeneous ideal of $X$. Recall that the \textit{symbolic powers} of $I_X$ are
\[
I_X^{(a)} \ = \ \big \{ f \in S \mid \ord_x(f) \ge a  \text{ for general (or every) }  x\in X \, \big \}. 
\]
Evidently $I_X^a \subseteq I_X^{(a)}$, and there has been a huge amount of interest in recent years in understanding the connections between actual and symbolic powers (cf \cite{ELS}, \cite{Hochster.Huneke}, \cite{BocciHarbourne}, \cite{Dao.ea}). If $X$  is non-singular, then $I_X^{(a)} = \satt{(I_X^a)}$. Therefore Theorem \ref{Intro.Sat.Deg.Thm} implies
\begin{corollaryalpha}
Assume that $X \subseteq \PP^r$ is smooth, and that $I_X$ is generated in degrees $d_0 \ge d_1 \ge \ldots \ge d_p$. Then
\[   \big ( I_X^{(a)} \big)_t \ = \ ( I_X^a)_t  \ \ \text{ for } \, t \, \ge \, ad_0 +d_1 + \ldots + d_r -r. \]
\end{corollaryalpha}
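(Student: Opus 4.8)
The plan is to deduce the Corollary directly from Theorem \ref{Intro.Sat.Deg.Thm}, applied to the ideal $J = I_X$ itself. Since $X$ is smooth, $X = \Zero{I_X}$ is a non-singular complex variety, so the hypotheses of the Theorem are met with $J = I_X$ generated in degrees $d_0 \ge d_1 \ge \ldots \ge d_p$. Concretely, I would fix a set of generators $f_0, \ldots, f_p$ of $I_X$ realizing these degrees (allowing repetitions among the $d_i$, which the Theorem permits), and set $d_{p+1} = \ldots = d_r = 0$ if $p < r$.

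I would then assemble the two inputs. On the one hand, the discussion preceding the Corollary records the identity $I_X^{(a)} = \satt{(I_X^a)}$ for smooth $X$; geometrically this holds because at every point of $X$ the ideal $I_X$ is locally a complete intersection, so the ordinary and symbolic power sheaves coincide on $\PP^r$ and differ only at the irrelevant ideal, which is precisely what saturation discards. On the other hand, Theorem \ref{Intro.Sat.Deg.Thm} applied to $J = I_X$ supplies the saturation-degree bound
\[ \satdeg(I_X^a) \ \le \ a d_0 + d_1 + \ldots + d_r - r. \]

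Combining these is then immediate. By the definition of the saturation degree, $\big(\satt{(I_X^a)}\big)_t = (I_X^a)_t$ for every $t \ge \satdeg(I_X^a)$, hence in particular for every $t \ge a d_0 + d_1 + \ldots + d_r - r$ in view of the displayed bound. Substituting the identity $I_X^{(a)} = \satt{(I_X^a)}$ on the left yields
\[ \big( I_X^{(a)} \big)_t \ = \ \big( \satt{(I_X^a)} \big)_t \ = \ (I_X^a)_t \ \ \text{ for } \ t \ge a d_0 + d_1 + \ldots + d_r - r, \]
which is exactly the assertion of the Corollary.

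Since this is a formal consequence of the Theorem together with a standard identity, I do not expect a genuine obstacle: all of the real work is already contained in the proof of Theorem \ref{Intro.Sat.Deg.Thm}. The only point meriting a word of care is the passage from ``generated in degrees $d_0 \ge \ldots \ge d_p$'' to a concrete tuple of generators $(f_0, \ldots, f_p)$ to which the Theorem literally applies; but because the Theorem allows equal degrees among the $d_i$, this causes no difficulty.
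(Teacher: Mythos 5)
Your proposal is correct and matches the paper's own (one-line) argument: the Corollary is deduced exactly by combining the identity $I_X^{(a)} = \satt{(I_X^a)}$ for smooth $X$ with the saturation-degree bound of Theorem \ref{Intro.Sat.Deg.Thm} applied to $J = I_X$. Your added justification of the identity via the local complete intersection property of $I_X$ is a harmless elaboration of a fact the paper simply cites.
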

\noi 
For example, suppose that $X \subseteq \PP^2$ consists of the three coordinate points, so that $I_X = (xy, yz, zx) \subseteq \CC[x,y,z]$.  The Corollary guarantees that $I_X^a$ and $I_X^{(a)}$ agree in degrees $\ge 2a + 2$, whereas in reality $\satdeg (I_X^a) = 2a$. So here again the statement is asymptotically but not precisely sharp.

In the case of finite sets,  results of Geramita-Gimigliano-Pitteloud \cite{GGP}, Chandler \cite{Chandler} and Sidman \cite{Sidman}   provide an alternative   bound that is often best-possible. Recall that a scheme $X \subseteq \PP^r$ is said to be $m$-regular in the sense of Castelnuovo--Mumford if its ideal sheaf $\II_X \subseteq \OO_{\PP^r}$ satisfies the vanishings:
\[
\HH{i}{\PP^r}{\II_X(m-i)} \  = \ 0 \ \ \text{ for \ } i > 0.  
\]
This is equivalent to asking that $I_X$ be generated in degrees $\le m$, that the first syzygies among minimal generators of $I_X$ appear in degrees $\le m+1$, the second syzygies in degrees $\le m+2$, and so on.\footnote{For saturated ideals, Castelnuovo--Mumford regularity of $I_X$ agrees with an algebraic notion of regularity introduced by Eisenbud and Goto \cite{Eisenbud.Goto} that we propose to call \textit{arithmetic regularity}. An arbitrary ideal $J \subseteq S$ is arithmetically $m$-regular if and only if $\satt{J}$ is $m$-regular and $\satdeg(J) \le m$. Given that we are interested in establishing bounds on saturation degree, unless otherwise stated we always refer to regularity in the geometric sense.}
The authors just cited show that if $X \subseteq \PP^r$ is an $m$-regular finite set, then
\[
\satdeg (I_X^a) \ \le \ am. 
\]
 This is optimal for the example of the three coordinate points in $\PP^2$. 
 
 Our second main result asserts  that the same statement holds when $\dim X = 1$. 
\begin{theoremalpha} \label{Regularity.Saturation.Bound.Curves}
Let $X \subseteq \PP^r$ be a smooth $m$-regular curve. Then
\[
\big( I_X^a\big)_t \ = \ \big( I_X^{(a)}\big)_t \ \ \text{for }   \ t \, \ge \, a m. 
\]
\end{theoremalpha}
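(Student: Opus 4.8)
The plan is to recast the assertion as the vanishing of a finite–length module and to run an induction on $a$. Since $X$ is smooth, the excerpt gives $I_X^{(a)}=\satt{(I_X^a)}$, and as the sheafification of $I_X^a$ is the power $\II_X^a$ we have $(I_X^{(a)})_t=\HH{0}{\PP^r}{\II_X^a(t)}$. Thus the statement $(I_X^a)_t=(I_X^{(a)})_t$ is equivalent to the surjectivity of the natural inclusion $(I_X^a)_t\hookrightarrow \HH{0}{\PP^r}{\II_X^a(t)}$ for $t\ge am$, i.e. to $H^0_{\frakm}(S/I_X^a)_t=0$ in that range. Before starting the induction I would record the consequences of $m$-regularity: the sheaf $\II_X(m)$ is $0$-regular, hence globally generated, with $\HH{0}{\PP^r}{\II_X(m)}=(I_X)_m$ (because $I_X$ is saturated) and $\HH{1}{\PP^r}{\II_X(k)}=0$ for $k\ge m-1$. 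The case $a=1$ is trivial, so the content is in the inductive step.

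The key reduction is that the whole step collapses to a single multiplication statement. Assuming the theorem for $a-1$, so that $(I_X^{a-1})_s=\HH{0}{\PP^r}{\II_X^{a-1}(s)}$ for $s\ge (a-1)m$, and noting that $t\ge am$ forces $t-m\ge (a-1)m$, the chain
\[
(I_X)_m\cdot (I_X^{a-1})_{t-m}\ \subseteq\ (I_X^a)_t\ \subseteq\ \HH{0}{\PP^r}{\II_X^a(t)}
\]
shows it suffices to prove the \emph{Main Lemma}: the multiplication map
\[
\HH{0}{\PP^r}{\II_X(m)}\otimes \HH{0}{\PP^r}{\II_X^{a-1}(t-m)}\longrightarrow \HH{0}{\PP^r}{\II_X^a(t)}
\]
is surjective for $t\ge am$. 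Indeed the image of this map, read inside $S_t$, is exactly $(I_X)_m\cdot (I_X^{a-1})_{t-m}$, so surjectivity forces the two outer terms of the display to coincide. Note that the Main Lemma is a purely sheaf-theoretic assertion about the powers $\II_X^b$, and can itself be proved by an independent induction on $a$.

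To attack the Main Lemma I would feed in the evaluation sequence of the globally generated sheaf $\II_X(m)$,
\[
0\longrightarrow M\longrightarrow \HH{0}{\PP^r}{\II_X(m)}\otimes \OO_{\PP^r}\longrightarrow \II_X(m)\longrightarrow 0,
\]
tensor by $\II_X^{a-1}(t-m)$, and compose with the surjection $\II_X(m)\otimes \II_X^{a-1}(t-m)\twoheadrightarrow \II_X^a(t)$. A diagram chase reduces surjectivity on global sections to the vanishing of $\HH{1}{\PP^r}{M\otimes \II_X^{a-1}(t-m)}$ together with control of a torsion sheaf supported on $X$ (arising because $\II_X^{a-1}$ is not locally free along $X$). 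Both error terms are governed by the conormal filtration: for smooth $X$ one has $\II_X^{b}/\II_X^{b+1}\cong \Sym^{b}(\NNN^\vee)$, a bundle on the \emph{curve} $X$, whence all relevant higher cohomology sits in degrees $\le 1$. Running parallel to this I would prove the companion vanishing $\HH{1}{\PP^r}{\II_X^a(s)}=0$ for $s\ge am$, extracted from
\[
0\longrightarrow \II_X^a\longrightarrow \II_X^{a-1}\longrightarrow \Sym^{a-1}(\NNN^\vee)\longrightarrow 0
\]
once one knows that global sections of $\Sym^{a-1}(\NNN^\vee)(s)$ lift to $\II_X^{a-1}(s)$ in that range.

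The hard part is precisely this positivity input for the symmetric powers of the conormal bundle: establishing, \emph{sharply at the threshold} $s\ge am$ rather than at some larger degree, that $\Sym^{a-1}(\NNN^\vee)$ twisted into this range has vanishing $H^1$ and liftable sections. A naive propagation of regularity through the conormal sequence loses a factor (it tends to produce $2m$, not $m$, already for $a=2$), so the bookkeeping must exploit that $\II_X(m)$ is $0$-regular, so that the conormal symbols are generated by products of lifted generators. I expect the cleanest path to the optimal constant is to cut $X$ by a general hyperplane $H$: then $Z=X\cap H\subseteq \PP^{r-1}$ is a finite, reduced, $m$-regular set, for which $\satdeg(I_Z^a)\le am$ is exactly the theorem of Geramita–Gimigliano–Pitteloud, Chandler and Sidman quoted above. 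One then transfers the bound upward through the restriction sequence for a general $\ell$, the delicate point being that $(I_X^a)|_H\ne I_Z^a$; the discrepancy is once again measured by the conormal bundle, and reconciling it is where the $\dim X=1$ hypothesis does its essential work.
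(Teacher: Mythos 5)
Your reduction of the theorem to the ``Main Lemma'' (surjectivity of the multiplication map $\HH{0}{\PP^r}{\II_X(m)}\otimes \HH{0}{\PP^r}{\II_X^{a-1}(t-m)}\to \HH{0}{\PP^r}{\II_X^a(t)}$ for $t\ge am$) is sound as far as it goes, but the proposal never proves that lemma, and that lemma is where the entire content of the theorem lives. You concede this explicitly: the evaluation-sequence/conormal-filtration bookkeeping you outline ``loses a factor'' and produces the threshold $2m$ rather than $m$ already for $a=2$. Since the bound is sharp (for the three coordinate points in $\PP^2$ one has $\satdeg(I_X^a)=am$ exactly, so $Q=\satt{(I_X^a)}/I_X^a$ is nonzero in degree $am-1$ while vanishing in degree $am$), no soft propagation of regularity can bridge that last degree; a new mechanism is required. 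The paper's mechanism is entirely different: it resolves $\II_X$ by a complex $U_\bullet$ of sums of line bundles with $\reg(U_i)\le m+i$, passes to Weyman's symmetric-power complex $L_\bullet = S^a(U_\bullet)$, whose terms satisfy $\reg(L_i)\le am+i$ --- the factor of $a$ is paid only once, in the $0$-th term, which is exactly the sharp bookkeeping your approach lacks --- proves this complex is exact off $X$ and exact at its right end where $X$ is smooth (so $\cH_0$ is supported on the finitely many singular points), and then runs a chopping argument in which the hypothesis $\dim X=1$ enters only through the support dimension of the homology sheaves $\cH_i$.

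Your fallback route through a general hyperplane section has a structural flaw beyond the discrepancy you name. Let $\ell$ be the equation of $H$ and $Q=\satt{(I_X^a)}/I_X^a$ as above. Granting the points result for $Z=X\cap H$ and the surjectivity of $(I_X)_s\to (I_Z)_s$ for $s\ge m$, the lifting argument yields: every $\sigma\in \HH{0}{\PP^r}{\II_X^a(t)}$ with $t\ge am$ can be written $\tau+\ell\sigma'$ with $\tau\in (I_X^a)_t$ and $\sigma'\in \HH{0}{\PP^r}{\II_X^a(t-1)}$, i.e.\ $Q_t=\ell\cdot Q_{t-1}$ for $t\ge am$. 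This propagates vanishing \emph{upward}: $Q_{t-1}=0$ implies $Q_t=0$. But what you know is $Q_t=0$ for $t\gg 0$, and what you need is vanishing at the \emph{bottom} of the range, $t=am$; the implication runs in the wrong direction. The sharp example above, where $Q_{am-1}\ne 0$ and $Q_{am}=0$, shows the relation $Q_{am}=\ell\, Q_{am-1}$ carries no contradiction whatsoever. So this route, as described, does not close either, and the essential difficulty you correctly isolated remains untouched by both halves of the proposal.
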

\noi In fact, for the saturation bound  it suffices that the curve $X$ be reduced. The statement is optimal (for all $a$) for instance when $X \subseteq \PP^4$ is a rational normal curve. 
We also show that if $X \subseteq \PP^r$ is a reduced surface, then $\reg(\II_X^a) \le a \cdot \reg(\II_X)$. 
We do not know any examples where the analogous statements fail for smooth varieties of higher dimension. 

Returning to the setting of Theorem \ref{Intro.Sat.Deg.Thm}, the first and third authors showed with Bertram some years ago \cite{BEL}  that if   $X \subseteq \PP^r$ is a smooth complex projective variety of codimension $e$ cut out as a scheme by homogeneous polynomials of degrees $d_0 \ge \ldots \ge d_p$, then $\II_X^a$ is $(ad_0 + d_1 +  \ldots + d_{e-1} -e)$-regular in the sense of Castelnuovo-Mumford. Note however that this  does not address the questions of saturation required to control the arithmetic (Eisenbud--Goto) regularity of $I_X^a$.\footnote{In particular, the proof of Proposition 2.2 in \cite{AV} seems to be erroneous.}  In fact, one can view Theorem \ref{Intro.Sat.Deg.Thm} as promoting the results of \cite{BEL} to statements about arithmetic regularity:
 \begin{corollaryalpha} Assume that $J \subseteq S$ satisfies the hypotheses of Theorem \ref{Intro.Sat.Deg.Thm}. Then
 \[ \textnormal{arith. \!reg}(J^a) \ \le \ ad_0 + (d_1 + \ldots + d_r -r).\]
 \end{corollaryalpha}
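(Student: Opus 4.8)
The plan is to deduce this Corollary directly from Theorem~\ref{Intro.Sat.Deg.Thm} by combining the saturation-degree bound it provides with the Castelnuovo--Mumford regularity bound for $\II_X^a$ established in \cite{BEL}. The key observation, recorded in the footnote of the excerpt, is that arithmetic (Eisenbud--Goto) regularity of a homogeneous ideal $J$ is controlled by two quantities: the geometric regularity of its saturation $\satt{J}$, and the saturation degree $\satdeg(J)$. Precisely, $J$ is arithmetically $m$-regular if and only if $\satt{J}$ is (geometrically) $m$-regular \emph{and} $\satdeg(J) \le m$. Thus the strategy is to bound each of these two contributions by $ad_0 + d_1 + \ldots + d_r - r$ and take the maximum.

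First I would apply Theorem~\ref{Intro.Sat.Deg.Thm} directly to $J^a$, which gives
\[ \satdeg(J^a) \ \le \ ad_0 + d_1 + \ldots + d_r - r. \]
This handles the saturation-degree half of the arithmetic regularity automatically, since $J$ (and hence $J^a$) satisfies the hypotheses of the Theorem. Next I would invoke the result of Bertram--Ein--Lazarsfeld \cite{BEL}: since $X$ is smooth of some codimension $e$ and cut out scheme-theoretically by the $f_i$ of degrees $d_0 \ge \ldots \ge d_p$, the ideal sheaf $\II_X^a$ is $(ad_0 + d_1 + \ldots + d_{e-1} - e)$-regular. Because $X$ is smooth, the saturation $\satt{(J^a)}$ equals the symbolic power $I_X^{(a)}$, whose associated sheaf is $\II_X^a$; therefore the geometric regularity of $\satt{(J^a)}$ is bounded by the \cite{BEL} estimate. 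Observing that $e \le r+1$ and that the discarded terms $d_e + \ldots + d_r$ together with the difference $r - e$ are all nonnegative (recall $d_i \ge 0$ with the convention $d_{p+1} = \ldots = d_r = 0$), one checks
\[ ad_0 + d_1 + \ldots + d_{e-1} - e \ \le \ ad_0 + d_1 + \ldots + d_r - r, \]
so the regularity of the saturation is also dominated by the target bound.

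Having bounded both the saturation degree and the regularity of the saturation by the same quantity $ad_0 + (d_1 + \ldots + d_r - r)$, the characterization of arithmetic regularity from the footnote immediately yields $\textnormal{arith.\!reg}(J^a) \le ad_0 + (d_1 + \ldots + d_r - r)$, as claimed. The one point requiring genuine care—and the likely main obstacle—is the passage from the geometric statement in \cite{BEL} about the ideal \emph{sheaf} $\II_X^a$ to the algebraic statement about $\satt{(J^a)}$, which rests on the smoothness identification $I_X^{(a)} = \satt{(I_X^a)}$ and on $J^a$ and $I_X^a$ having the same saturation. Once this compatibility is in place, together with the elementary verification that the shorter \cite{BEL} sum is majorized by the full sum, the Corollary follows with no further computation.
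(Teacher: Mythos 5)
Your overall route is exactly the one the paper intends: the paper gives no separate argument for this Corollary, presenting it as the immediate combination of Theorem~\ref{Intro.Sat.Deg.Thm} (which bounds $\satdeg(J^a)$), the regularity bound of \cite{BEL} for the sheaf $\II_X^a$ (which controls the geometric regularity of $\satt{(J^a)} = I_X^{(a)}$), and the footnote characterization of arithmetic regularity as the maximum of these two contributions. Your identification of $\satt{(J^a)}$ with $\Gamma_*(\II_X^a)$ and your use of the footnote are both correct.

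The gap is in the comparison step, and it is a real one. From ``$d_e + \cdots + d_r \ge 0$ and $r-e \ge 0$'' one cannot conclude
\[
ad_0 + d_1 + \cdots + d_{e-1} - e \ \le \ ad_0 + d_1 + \cdots + d_r - r,
\]
since that inequality is equivalent to $d_e + \cdots + d_r \ge r-e$: the $r-e+1$ discarded degrees must be (essentially all) \emph{positive}, not merely nonnegative. The margin is in fact tighter than you allow, because the correct form of the \cite{BEL} bound is $ad_0 + d_1 + \cdots + d_{e-1} - e + 1$ rather than $-e$ (test it on a smooth hypersurface of degree $d$: there $\II_X^a = \OO_{\PP^r}(-ad)$ has regularity exactly $ad$, not $ad-1$); so what is really needed is $d_e + \cdots + d_r \ge r-e+1$, which holds when every discarded degree is at least $1$, i.e.\ when $p \ge r$, but not in general under the zero-padding convention $d_{p+1} = \cdots = d_r = 0$ that you invoke. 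And in that degenerate regime the asserted bound itself breaks down, so no repair of the comparison is possible: for $J = (x_0,x_1) \subseteq \CC[x_0,\ldots,x_3]$, cutting out a line in $\PP^3$, the right-hand side reads $a + (1+0+0-3) = a-2$, while $J^a = (x_0,x_1)^a$ is already saturated with $\textnormal{arith.\,reg}(J^a) = a$ (it is generated in degree $a$ with a linear resolution). So your derivation is valid precisely when $d_1,\ldots,d_r$ are degrees of actual hypersurfaces; the appeal to nonnegativity glosses over the one point in the argument where genuine care is required --- a point which, to be fair, the paper itself elides.
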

\noi 
It is known (\cite{Kod}, \cite{CHK}) that if $J \subseteq S$ is an arbitrary homogeneous ideal then \[  \textnormal{arith. \!reg}(J^a) \ =  \ ad  + b  \ \text{ \ when  } \ a \gg 0, \] where $d$ is the maximal degree needed to generate a reduction of $J$ -- which coincides with  the generating degree of $J$ when it is equigenerated -- and $b$ is some constant. However computing the constant term $b$ has proven elusive, and  the Corollary gives a bound in the case at hand. 

The proofs of these results revolve around using  complexes of sheaves to study the image in $\HHHH{*}{0}{\PP^r}{\II_X^a} = \satt{(I_X^{a})}$ of the powers of the ideal spanned by generators of $I_X$ or $J$: this approach was inspired in part by geometrizing the arguments of Cooper and coauthors for codimenson two subvarieties in \cite{Cooper+}. Specifically, suppose that
\[    \eps : U_0 \, =_{\text{def}} \, \oplus \, \OO_{\PP^r}(-d_i) \lra \II_X  \]
is the surjective map of sheaves determined by generators of $I_X$ or $J$. If $X$ is $m$-regular, then this sits in an exact complex $U_\bullet$ of bundles:
\[
0 \lra U_{r-1} \lra U_{r-2} \lra \ldots \lra U_1 \lra U_0 \overset{\eps}  \lra \II_X \lra 0
\] where $\reg(U_i) \le m + i$. Weyman \cite{Weyman} (see also \cite{Tchernev}) constructs a new complex $L_\bullet = \Sym^a(U_\bullet)$  that takes the form
\[
\ldots \lra L_2 \lra L_1 \lra S^a(U_0) \lra \II_X^a \lra 0
\]
where $\reg(L_i) \le am + i$. This complex is exact only off $X$, but as in \cite{GLP} when $\dim X = 1$ one can still read off the surjectivity of 
\[
\HH{0}{\PP^r}{S^a (U_0)(t)} \lra \HH{0}{\PP^r}{\II_X^a(t)}  
\]
for $t \ge am$. This gives Theorem \ref{Regularity.Saturation.Bound.Curves}. 

Turning to Theorem  \ref{Intro.Sat.Deg.Thm}, a natural idea is to start with the Koszul complex
\[ \ldots \lra \Lambda^3 U_0 \lra \Lambda^2 U_0 \lra U_0 \lra \II_X \lra 0.\]
As established by Buchsbaum--Eisenbud \cite{Buchsbaum.Eisenbud}, this determines a new complex
\[
\ldots \lra S^{a, 1^2}(U_0) \lra S^{a,1}(U_0) \lra S^a(U_0) \lra \II_X^a \lra 0, \tag{*}
\]
where $S^{a, 1^k}(U_0)$ denotes the Schur power of $U_0$ corresponding to the Young diagram $(a, 1^{k})$. We observe that 
\[   \reg \big(S^{a, 1^{i}}(U_0) \big) \ \le  \ ad_0 + d_1 + \ldots + d_i, \]
so if (*) were exact then the statement of the Theorem would follow immediately. Unfortunately (*) is exact only if $X$ is a complete intersection, but by blowing up $X$ this construction yields an exact complex whose cohomology groups one can control with some effort. At the end of the day, the computation boils down to using Kodaira--Nakano vanishing on $X$ to prove 
a vanishing statement for  symmetric powers of the normal bundle to $X$ in $\PP^r$:
\begin{propositionalpha} \label{NB.Vanishing.Prop}
Let $X \subseteq \PP^r$ be a smooth complex projective variety, and denote by $N = N_{X/\PP^r}$ the normal bundle to $X$ in $\PP^r$. Then
\[ \HHH{i}{X}{S^k N \otimes \det N \otimes \OO_X(\ell) } \ = \ 0  \ \ \text{ for } i > 0
\]
and every $k \ge 0$, $\ell \ge -r$. 
\end{propositionalpha}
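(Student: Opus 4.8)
The plan is to deduce the vanishing from the Kodaira--Nakano theorem on $X$ itself, after resolving $S^kN$ by bundles whose cohomology is accessible. The key preliminary observation is that the determinant twist appearing in the statement is exactly what is needed to turn everything into differential forms on $X$. Indeed, from the normal bundle sequence $0 \to T_X \to T_{\PP^r}|_X \to N \to 0$ one computes $\det N = \det(T_{\PP^r}|_X)\otimes \omega_X = \OO_X(r+1)\otimes \omega_X$, so that
\[ S^kN \otimes \det N \otimes \OO_X(\ell) \ = \ \omega_X \otimes S^kN \otimes \OO_X(m), \qquad m := r+1+\ell \ge 1, \]
the inequality $m\ge 1$ being precisely a restatement of the hypothesis $\ell \ge -r$. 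Thus it suffices to prove that $H^i\big(X,\, \omega_X\otimes S^kN\otimes \OO_X(m)\big)=0$ for $i>0$ and all $m\ge 1$.

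The next step is to present $N$ as a quotient of a sum of line bundles. Combining the Euler sequence $0\to \OO_X \to \OO_X(1)^{\oplus(r+1)}\to T_{\PP^r}|_X\to 0$ with the normal bundle sequence exhibits a surjection $W:=\OO_X(1)^{\oplus(r+1)}\twoheadrightarrow N$ whose kernel $K$ is a bundle of rank $n+1$ (with $n=\dim X$) sitting in an extension $0\to \OO_X\to K\to T_X\to 0$. Because this surjection is locally split, the associated symmetric Koszul complex
\[ \cdots \to \Lambda^2 K\otimes S^{k-2}W \to K\otimes S^{k-1}W \to S^kW \to S^kN\to 0 \]
is exact and resolves $S^kN$. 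Tensoring with $\omega_X\otimes \OO_X(m)$ and running the hypercohomology spectral sequence, the target vanishing in degree $i>0$ follows once one checks that
\[ H^{i+j}\big(X,\ \Lambda^jK\otimes \omega_X\otimes \OO_X(m+k-j)\big)=0 \quad\text{for all } j\ge 0, \]
where I have used that $S^{k-j}W=\bigoplus \OO_X(k-j)$.

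To evaluate these groups I would filter $\Lambda^jK$ using the extension above, producing the two graded pieces $\Lambda^jT_X\otimes\omega_X$ and $\Lambda^{j-1}T_X\otimes\omega_X$. The crucial identity is $\Lambda^pT_X\otimes\omega_X\cong \Omega^{n-p}_X$, so that after the twist these pieces become $\Omega^{n-j}_X\otimes\OO_X(m+k-j)$ and $\Omega^{n-j+1}_X\otimes\OO_X(m+k-j)$. Since $j$ ranges over $0\le j\le \min(k,n+1)$, one has $m+k-j\ge m\ge 1$, so $\OO_X(m+k-j)$ is ample; and with $s=i+j>j$ the Kodaira--Nakano inequality $(n-j)+s>n$ holds for the first piece, and a fortiori for the second. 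Hence every contributing group vanishes and the proof concludes.

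The main obstacle is conceptual rather than computational: a direct appeal to Griffiths' vanishing for the ample bundle $N$ (via $\PP(N)$ and ordinary Kodaira vanishing) does \emph{not} work, because the line bundle one would need to be ample on $\PP(N)$ acquires a factor of $\pi^*\omega_X^{-1}$ and fails to be even nef once $X$ is of general type. The device that circumvents this is the substitution $\det N=\omega_X\otimes\OO_X(r+1)$ together with the resolution by the $\Lambda^jK\otimes S^{k-j}W$: it trades symmetric powers of $N$ for differential forms $\Omega^\bullet_X$ twisted by genuinely ample line bundles, at which point Kodaira--Nakano on $X$ applies cleanly. The delicate bookkeeping is verifying that the twist $\OO_X(m+k-j)$ remains ample and that the cohomological degree stays inside the Kodaira--Nakano range throughout the resolution; the extreme term $j=k$, where the twist is exactly $\OO_X(m)$ with $m\ge 1$, is where the hypothesis $\ell\ge -r$ is used sharply.
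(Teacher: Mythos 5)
Your proof is correct and is essentially the paper's own argument: both rest on the adjunction identity $\det N \otimes \OO_X(\ell) = \omega_X \otimes \OO_X(\ell+r+1)$, a Koszul-type resolution of $S^kN$ built out of the normal bundle and Euler sequences, the identity $\Lambda^p T_X \otimes \omega_X \cong \Omega^{n-p}_X$, and a final appeal to Kodaira--Nakano vanishing. The only difference is organizational: you compose the two surjections at the outset to present $N$ as a quotient of $\OO_X(1)^{\oplus (r+1)}$ and then filter $\Lambda^j K$ by $\Lambda^j T_X$ and $\Lambda^{j-1} T_X$, whereas the paper first resolves $S^k N$ by the terms $S^{k-j}T_{\PP^r}|_X \otimes \Lambda^j T_X$ and only afterwards presents each $S^{k-j}T_{\PP^r}|_X$ by sums of line bundles via the Euler sequence.
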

\noi (Similar but slightly different vanishings were established by Schneider and Zintl in \cite{Schneider.Zintl}.)
 We hope that some of these ideas may find other applications in the future.\footnote{We remark that some of the auxiliary results appearing here -- for example the Proposition just stated -- were known to the first and third authors some years ago in connection with their work on \cite{BEL}. However they were put aside in favor of the simpler arguments  with vanishing theorems that eventually appeared in that paper. }
  
 The paper is organized as follows. The first section is devoted to Theorem \ref{Regularity.Saturation.Bound.Curves}. We collect in \S 2 some preliminary results towards the Macaulay-type bounds. Specifically, we discuss the Buchsbaum--Eisenbud powers of Koszul complexes, the computation of some push-forwards from a blowing-up, and Proposition \ref{NB.Vanishing.Prop}. The proof of Theorem \ref{Intro.Sat.Deg.Thm} occupies \S 3. We work throughout over the complex numbers.
 
 We are grateful to  Sankhaneel Bisui, David Eisenbud, Elo\'isa Grifo and Claudia Miller for valuable remarks and correspondence.

\numberwithin{equation}{section}
\section{Saturation and regularity}

The present section is devoted to the proof of Theorem \ref{Regularity.Saturation.Bound.Curves} from the Introduction.

We start with some general remarks. Let $X \subseteq \PP^r$ be a  complex projective variety or scheme, with ideal sheaf $\II_X \subseteq \OO_{\PP^r}$  and homogeneous ideal $I_X \subseteq S$.  
Denote by $U_\bullet$  the locally free resolution of $\II_X$ obtained by sheafifying a minimal graded free resolution of $I_X$:
\begin{equation} \label{m-reg.Resoln.I}
0 \lra U_{r} \lra U_{r-1} \lra \ldots \lra U_1 \lra U_0 \overset{\eps}  \lra \II_X \lra 0.  \end{equation}
  Thus 
each $U_i$ is a direct sum of line bundles, and we recover the original resolution as the the complex $\HHHH{*}{0}{\PP^r}{U_\bullet}$ obtained from $U_\bullet$ by taking global sections of all twists. 

Consider now the surjective homomorphism of sheaves
\[ S^a(\eps) \, : \, S^a U_0 \lra \II_X^a . \]
For any   $t \ge 0$  one has
\[ \HH{0}{\PP^r}{\II_X^a(t)} \ = \ \left( \satt{\left( I_X^a \right)} \right)_t. \]
On the other hand, the fact that  $U_0$ is constructed from minimal generators of $I_X$ implies that 
\[
\Image \Big( \HH{0}{\PP^r}{S^a (U_0) (t) } \lra \HH{0}{\PP^r}{\II_X^a(t)} \Big) \ = \  \big( I_X^a)_t .
\]
Therefore
\begin{lemma} \label{Surjectivity.Suffices.Lemma} The degree $t$ pieces of $I_X^a$ and $\satt{(I_X^a)}$ coincide if and only if the homomorphism
\[
\HH{0}{\PP^r}{S^a (U_0) (t) } \lra \HH{0}{\PP^r}{\II_X^a(t)} 
\]
determined by $S^a(\eps)$ is surjective.  \qed
\end{lemma}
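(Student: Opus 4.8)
The statement is a formal consequence of the two graded identifications recorded immediately before it, so the plan is to establish those two equalities and then compare image with target. Concretely, set $V(t) := \HH{0}{\PP^r}{S^a (U_0)(t)}$ and $W(t) := \HH{0}{\PP^r}{\II_X^a(t)}$, and recall that $S^a(\eps)$ induces the map $V(t) \to W(t)$ under consideration. Since a homomorphism of vector spaces is surjective exactly when its image fills the target, it suffices to prove $\Image\big(V(t) \to W(t)\big) = (I_X^a)_t$ together with $W(t) = \big(\satt{(I_X^a)}\big)_t$. Granting these, surjectivity is equivalent to $(I_X^a)_t = \big(\satt{(I_X^a)}\big)_t$, which is precisely the assertion.

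For the target $W(t)$, I would use that sheafification is compatible with products of ideals, so that $\widetilde{I_X^a} = (\widetilde{I_X})^a = \II_X^a$, together with the standard fact that for $t \ge 0$ the degree-$t$ piece of the saturation of a homogeneous ideal $J$ is computed by $\HH{0}{\PP^r}{\widetilde{J}(t)}$. Applying this with $J = I_X^a$ yields $\HH{0}{\PP^r}{\II_X^a(t)} = \big(\satt{(I_X^a)}\big)_t$.

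For the image, I would unwind the construction of $U_0$. Writing $U_0 = \bigoplus_i \OO_{\PP^r}(-d_i)$ with $\eps$ carrying the $i$-th summand to the minimal generator $g_i$ of $I_X$, the bundle $S^a(U_0)$ splits as a direct sum of line bundles $\OO_{\PP^r}\big(-(d_{i_1}+\cdots+d_{i_a})\big)$ indexed by length-$a$ multisets $\{i_1,\ldots,i_a\}$, and $S^a(\eps)$ sends the corresponding summand to the product $g_{i_1}\cdots g_{i_a}$. Taking global sections of the $t$-twist gives $V(t) = \bigoplus S_{\,t-(d_{i_1}+\cdots+d_{i_a})}$, and its image inside $W(t)\subseteq S_t$ equals $\sum S_{\,t-\sum_j d_{i_j}}\cdot\big(g_{i_1}\cdots g_{i_a}\big)$. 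Because $I_X^a$ is by definition generated by the length-$a$ products of generators of $I_X$, this sum is exactly $(I_X^a)_t$.

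The argument is essentially bookkeeping; the only points that need care are the identification $\widetilde{I_X^a} = \II_X^a$ and the verification that the global sections of $S^a(U_0)$ genuinely surject onto the products $g_{i_1}\cdots g_{i_a}$ rather than a smaller span, which is where minimality of the chosen generators and the hypothesis $t \ge 0$ enter. No geometric obstacle arises at this stage: the real work — establishing the surjectivity of $V(t) \to W(t)$ in the appropriate range of $t$, of which this lemma is merely the reformulation — is what is carried out in the proof of Theorem \ref{Regularity.Saturation.Bound.Curves}.
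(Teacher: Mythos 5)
Your proof is correct and follows essentially the same route as the paper: the lemma is stated there as an immediate consequence of exactly the two identifications you establish, namely $\HH{0}{\PP^r}{\II_X^a(t)} = \big(\satt{(I_X^a)}\big)_t$ and $\Image\big(\HH{0}{\PP^r}{S^a(U_0)(t)} \to \HH{0}{\PP^r}{\II_X^a(t)}\big) = (I_X^a)_t$, with your explicit direct-sum bookkeeping simply filling in the details the paper leaves implicit. (One small remark: for the image computation only the fact that the $g_i$ generate $I_X$ is needed, not minimality of the generating set.)
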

\noi The plan is to study $S^a(\eps)$ by realizing it as the last map of a complex $S^a( U_\bullet)$.

Specifically, consider a smooth variety $M$, a subvariety $X \subseteq M$, and a locally free resolution $U_\bullet$ of $\II_X \subseteq \OO_M$ as above:
\begin{equation} \label{m-reg.Resoln.II}
0 \lra U_{r} \lra U_{r-1} \lra \ldots \lra U_1 \lra U_0 \overset{\eps}  \lra \II_X \lra 0.  \end{equation}
As explained by Weyman \cite{Weyman} and Tchernev \cite{Tchernev}, $U_\bullet$ determines  for fixed $a \ge 1$ a new complex $L_\bullet = S^a(U_\bullet)$ having the shape
\begin{equation} \label{Weyman.Complex.1}
\xymatrix @C=18pt{
\ldots \ar[r] & L_4 \ar[r] &L_3 \ar[r] & { \begin{matrix} S^{a-2}U_0 \otimes \Lambda^2 U_1 \\ \oplus \\ S^{a-1}U_0 \otimes U_2 \end{matrix}} \ar[r] &S^{a-1}U_0 \otimes U_1 \ar[r] & S^aU_0\ar[r]& \II_X^a  \ar[r] & 0.
}
\end{equation}
The last map on the right is $S^a(\eps)$, and the homomorphism  $S^{a-1}U_0 \otimes U_1 \lra S^a U_0$ is the natural one arising as the composition
 \[
 S^{a-1}U_0 \otimes U_1 \lra S^{a-1}U_0 \otimes U_0 \lra S^a U_0.
 \] The  $L_i$ are determined by setting
 \begin{equation} \label{Weyman.Complex.2}
 C^k (U_j) \ = \ \begin{cases} \ S^k U_j \ & \text{if $j$ is even} \\ \ \Lambda^k U_j \ & \text{if $j$ is odd} \end{cases},
 \end{equation}
and then taking
\begin{equation} \label{Weyman.Complex.3}
L_i\ = \ \bigoplus_{
\substack{k_0 + \ldots + k_r = a \\ k_1 + 2k_2 + \ldots + rk_r = i}} C^{k_0}(U_0) \otimes C^{k_1}(U_1) \otimes \ldots \otimes C^{k_r}(U_r).
\end{equation}

It follows from \cite[Theorem 1]{Weyman} or \cite[Theorem 2.1]{Tchernev} that:
\begin{equation} \label{Exact.Away.From.X.Equation}
\text{The  complex } \eqref{Weyman.Complex.1}
  \text{ is exact away from $X$}. 
\end{equation}
In general one does not expect exactness at  points of $X$, but when $X$ is  smooth  the right-most terms at least  are well-behaved:
\begin{lemma} \label{Right.Hand.Exactness.Weyman.Complex}
Assume that $X$ is non-singular. Then the sequence
\[ S^{a-1}U_0 \otimes U_1 \lra S^a U_0 \lra \II_X^a \lra 0\]
is exact. 
\end{lemma}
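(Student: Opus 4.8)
\emph{Proof proposal.} The plan is to reduce the statement to two formal facts: the right-exactness of the symmetric-power functor, and the observation that a smooth subvariety of a smooth variety is cut out \'etale-locally by a regular sequence, so that its ideal sheaf is of linear type. Surjectivity of $S^a(\eps) : S^a U_0 \lra \II_X^a$ on the right is automatic, since $\eps : U_0 \lra \II_X$ is surjective and both $S^a(-)$ and the passage from the symmetric power to the ordinary power are right-exact. So the only content is exactness at $S^a U_0$, i.e.\ the assertion that
\[ \image\big( S^{a-1}U_0 \otimes U_1 \lra S^a U_0 \big) \ = \ \ker\big( S^a U_0 \lra \II_X^a\big). \]

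First I would identify the left-hand side. Because $U_\bullet$ is a resolution, $\image(U_1 \lra U_0) = \ker(\eps) =: K$, and the map $U_1 \lra K$ is surjective; since the homomorphism in the Lemma is the composite $S^{a-1}U_0 \otimes U_1 \lra S^{a-1}U_0 \otimes U_0 \lra S^a U_0$, its image is exactly the submodule $K \cdot S^{a-1}U_0$. Next, applying the $a$-th symmetric power to the short exact sequence $0 \lra K \lra U_0 \overset{\eps}\lra \II_X \lra 0$ produces the canonical right-exact sequence
\[ K \otimes S^{a-1}U_0 \lra S^a U_0 \lra S^a \II_X \lra 0, \]
whence $\ker(S^a U_0 \lra S^a \II_X) = K \cdot S^{a-1}U_0$. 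Thus the Lemma is equivalent to the claim that the natural surjection $S^a \II_X \lra \II_X^a$ is an \emph{isomorphism}: granting this, $\ker(S^a U_0 \lra \II_X^a)$ and $\ker(S^a U_0 \lra S^a \II_X)$ coincide, giving the asserted exactness.

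The crux, and where non-singularity is used, is therefore the identification $S^a \II_X \overset{\sim}\lra \II_X^a$. Since $M$ is smooth and $X$ is non-singular, $X$ is a local complete intersection in $M$: at a point of $X$ the stalk of $\II_X$ is generated by a regular sequence of length $\codim(X,M)$, and off $X$ one has $\II_X = \OO_M$. An ideal generated by a regular sequence is of linear type, meaning the canonical map from its symmetric algebra to its Rees algebra is an isomorphism; in degree $a$ this reads $S^a \II_X \overset{\sim}\lra \II_X^a$. As this can be checked stalk by stalk at every point of $M$, the corresponding map of sheaves is an isomorphism, and the proof concludes. I expect this last point to be the main obstacle to articulate cleanly; it is precisely the step that fails for a general (non-lci) $X$, where $S^a \II_X \lra \II_X^a$ has nonzero kernel and the sequence acquires extra homology. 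Everything preceding it is functorial and uses only that $U_\bullet$ is a resolution built from minimal generators.
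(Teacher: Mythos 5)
Your proof is correct, and it differs from the paper's in how it handles the arbitrariness of the presentation $U_1 \lra U_0$. The paper localizes immediately: over $\OO_{M,x}$ the stalk of $\II_X$ is generated by a regular sequence, one checks by hand the exactness of $S^{a-1}\UU \otimes \Lambda^2\UU \lra S^a\UU \lra \II^a \lra 0$ for the minimal Koszul presentation (the same computation underlying your linear-type fact, cf.\ Proposition~\ref{Power.Koszul.Complex.Proposition}), and then compares the given presentation with the minimal one, writing an arbitrary free presentation over the local ring as the Koszul one plus split free summands $\mathcal{A}$, $\mathcal{B}$ and pushing exactness through the decomposition $S^a(\UU\oplus\mathcal{A}) = S^a\UU \oplus \mathcal{A}\otimes S^{a-1}(\UU\oplus\mathcal{A})$. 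You avoid this presentation comparison entirely: right-exactness of the symmetric power identifies $\coker\big(S^{a-1}U_0\otimes U_1 \lra S^aU_0\big)$ with $S^a\II_X$ for \emph{any} resolution, so the Lemma becomes the presentation-independent statement that the canonical surjection $S^a\II_X \lra \II_X^a$ is an isomorphism, and only this last point is checked on stalks, where it is Micali's theorem that an ideal generated by a regular sequence is of linear type. The two arguments rest on the identical core fact about regular sequences; yours buys a cleaner reduction (no bookkeeping with the summands $\mathcal{A}$, $\mathcal{B}$) and makes transparent that only the lci property of $X$ is used, while the paper's version is more self-contained, replacing the appeal to the linear-type theorem by the explicit local computation it attributes to Buchsbaum--Eisenbud.
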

\begin{proof} The question being local, we can work over the local ring $\OO = \OO_{M,x}$ of $M$ at a point $x \in X$. Since $X$ is smooth,  $\II = \II_{X, x} \subseteq \OO$ is  generated by a regular sequence of length $e = \codim \, X$. Thus $\II$ has a minimal presentation 
\[ \Lambda^2 \UU \lra \UU \lra \II \lra 0 
\]
given by the beginning of a Koszul complex, where $\mathcal U = \OO^e$ is a free module of rank $e$. Here  one checks by hand the exactness of 
\[
S^{a-1}\UU \otimes \Lambda^2 \UU \lra S^a \UU \lra \II^a \lra 0.
\]  
(Compare Proposition  \ref{Koszul.Complex.Power}
 below.) An arbitrary free presentation of $\II$   then has the form
\[    \Lambda^2 \UU \oplus \mathcal{A} \oplus \mathcal{B} \lra \UU \oplus \mathcal{A} \lra \II \lra 0,\]
where $\mathcal{A}$ is a free module mapping to zero in $\II$,  $\mathcal{B}$ is a free module mapping to zero in $\UU \oplus \mathcal{A}$, and the left-hand map is the identity on $\mathcal{A}$.  It suffices to verify the exactness of
\[
S^{a-1}\big( \UU \oplus \mathcal{A} \big)  \otimes \big( \Lambda^2 \UU \oplus \mathcal{A} \big) \lra S^a \big( \UU \oplus \mathcal{A}\big) \lra \II^a \lra 0, 
\]
and this is clear upon writing  $S^a \big( \UU \oplus \mathcal{A}\big)=S^a \UU \, \oplus \, \mathcal{A} \otimes 
S^{a-1}\big( \UU \oplus \mathcal{A} \big)$. \end{proof}

With these preliminaries out of the way, we now prove (a slight strengthening of) Theorem \ref{Regularity.Saturation.Bound.Curves} from the Introduction.
\begin{theorem} \label{Reduced.Curve.Theorem} Let $X \subseteq \PP^r$ be a reduced $($but possibly singular$)$ curve, and assume that $X$ is $m$-regular in the sense of Castelnuovo--Mumford. Denote by $I_X \subseteq S$ the homogeneous ideal of $X$. Then
\[
\satdeg( I_X^a) \ \le \ a m.
\]
\end{theorem}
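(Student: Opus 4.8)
The plan is to deduce the bound from Lemma \ref{Surjectivity.Suffices.Lemma}: it suffices to show that
\[
\HH{0}{\PP^r}{S^a U_0(t)} \lra \HH{0}{\PP^r}{\II_X^a(t)}
\]
is surjective for every $t \ge am$. I would extract this surjectivity from the Weyman complex $L_\bullet = S^a(U_\bullet)$ of \eqref{Weyman.Complex.1}, whose last map is exactly $S^a(\eps)$, by the kind of cohomology chase used in \cite{GLP}: although $L_\bullet$ is only exact away from $X$ by \eqref{Exact.Away.From.X.Equation}, the hypothesis $\dim X = 1$ will be just strong enough to keep the failure of exactness from obstructing the argument.

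The first step is the regularity estimate $\reg(L_i) \le am + i$. Since $X$ is $m$-regular, each $U_j$ is a direct sum of line bundles $\OO_{\PP^r}(-b)$ with $b \le m+j$; hence every summand $C^{k_0}(U_0)\otimes \cdots \otimes C^{k_r}(U_r)$ occurring in $L_i$, where $\sum_j k_j = a$ and $\sum_j j\,k_j = i$, is itself a direct sum of line bundles of degree $\ge -\big(\sum_j k_j(m+j)\big) = -(am+i)$. Consequently $\HH{q}{\PP^r}{L_i(s)} = 0$ whenever $q \ge 1$ and $s \ge am + i - q$; in particular $\HH{q}{\PP^r}{L_q(am)} = 0$ for all $q \ge 1$, which is the numerical input the chase will consume.

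Now set $N = \ker\big(S^a U_0 \to \II_X^a\big)$, so that the desired surjectivity is equivalent to $\HH{1}{\PP^r}{N(t)} = 0$. Introduce the boundary, cycle and homology sheaves
\[
B_i = \image(L_{i+1}\to L_i), \qquad Z_i = \ker(L_i \to L_{i-1}), \qquad H_i = Z_i/B_i,
\]
which by \eqref{Exact.Away.From.X.Equation} are all supported on the curve $X$. Chasing the short exact sequences $0 \to B_0 \to N \to H_0 \to 0$, then $0 \to Z_i \to L_i \to B_{i-1}\to 0$ and $0 \to B_i \to Z_i \to H_i \to 0$, one finds that $\HH{1}{\PP^r}{N(t)}$ is dominated by $\HH{1}{\PP^r}{H_0(t)}$ together with the groups $\HH{j+1}{\PP^r}{L_{j+1}(t)}$ and $\HH{j+2}{\PP^r}{H_{j+1}(t)}$ for $j \ge 0$. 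The twisted terms $\HH{j+1}{\PP^r}{L_{j+1}(t)}$ vanish for $t \ge am$ by the regularity bound above, and each $H_{i}$ with $i \ge 1$, being a coherent sheaf on a curve, has no cohomology in degrees $\ge 2$, so every term $\HH{j+2}{\PP^r}{H_{j+1}(t)}$ vanishes identically. The chain terminates once the cohomological degree exceeds $r$ (equivalently, once $L_i = 0$).

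The one genuinely delicate point — and the step I expect to be the main obstacle — is the rightmost homology sheaf $H_0$, which records the failure of exactness of $L_1 \to S^a U_0 \to \II_X^a \to 0$ and enters through $\HH{1}{\PP^r}{H_0(t)}$, a group I cannot kill by the curve dimension alone. Here the reduced-but-possibly-singular hypothesis is handled by observing that Lemma \ref{Right.Hand.Exactness.Weyman.Complex} is purely local and invokes smoothness only at the point under consideration; its proof therefore establishes exactness at every smooth point of $X$. Since $X$ is reduced, it is smooth off a finite set, so $H_0$ is supported on $\mathrm{Sing}(X)$, is $0$-dimensional, and hence $\HH{1}{\PP^r}{H_0(t)} = 0$ for all $t$ (when $X$ is smooth, $H_0 = 0$ outright). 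Assembling these vanishings yields $\HH{1}{\PP^r}{N(t)} = 0$ for $t \ge am$, giving the required surjectivity and thus $\satdeg(I_X^a) \le am$.
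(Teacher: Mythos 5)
Your proposal is correct and takes essentially the same route as the paper's own proof: the same reduction via Lemma \ref{Surjectivity.Suffices.Lemma}, the same Weyman complex with $\reg(L_i) \le am+i$, the same use of exactness off $X$, and the same appeal to the local nature of Lemma \ref{Right.Hand.Exactness.Weyman.Complex} to conclude that $\cH_0$ sits on the finitely many singular points of the reduced curve. The only difference is presentational: you carry out the cohomology chase explicitly, whereas the paper packages it into a separate ``chopping'' lemma whose proof it leaves to the standard diagram chase.
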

\begin{proof}
The $m$-regularity of $X$ means that we can take a resolution $U_\bullet$ of $\II_X$ as in \eqref{m-reg.Resoln.I} where $U_i$ is a direct sum of line bundles of degrees $\ge -m -i$, ie $\reg(U_i) \le m + i$. Consider the resulting Weyman complex $L_\bullet = S^a(U_\bullet)$:
\[
\lra L_3 \lra L_2 \lra L_1 \lra L_0 \lra \II_X^a  \lra 0,  \tag{*} \]
the last map being the surjection $S^a(\eps) : L_0 = S^a U_0 \lra \II_X $. In view of Lemma \ref{Surjectivity.Suffices.Lemma}, the issue is to establish the surjectivity of the homomorphism
\[  \HH{0}{\PP^r}{L_0(t)} \lra \HH{0}{\PP^r}{\II_X^a(t)}  \tag{**} \]
for $t \ge am$. To this end, observe first from \eqref{Weyman.Complex.2} and 
\eqref{Weyman.Complex.3} that
\[  \reg(L_i) \ \le \ am + i. \]
Consider next the homology sheaves $\cH_i = \cH_i(L_\bullet \lra \II_X^a)$ of the augmented complex (*).  (So for $i = 0$ we understand  $\cH_0 = \ker ( L_0 \lra \II_X^a) / \Image(L_1 \lra L_0).)$   Thanks to \eqref{Exact.Away.From.X.Equation}, these are all supported on the one-dimensional set $X$. Moreover  it follows from Lemma \ref{Right.Hand.Exactness.Weyman.Complex} that $\cH_0$ is supported on the finitely many singular points of $X$. Therefore the required surjectivity (**) is a consequence of the first statement of the following Lemma.
 \end{proof}
 
 \begin{lemma} Consider a complex $L_\bullet$ of coherent sheaves on $\PP^r$ sitting in a diagram
 \begin{equation}\label{Chopping.Lemma.Eqn} \ldots \lra L_3 \lra L_2 \lra L_1 \lra L_0\overset{\eps} \lra \FF  \lra 0,   \end{equation}
 and denote by $\cH_i = \cH_i(L_\bullet \lra \FF)$ the $i^{\text{th}}$ homology sheaf of the augmented complex \eqref{Chopping.Lemma.Eqn}.\footnote{So as above,  the group of zero-cycles used to compute  $\cH_0$ is $\ker (\eps)$.}
Assume that $\eps$ is surjective, and let $p$ be an integer with the property that  $L_i$ is $(p + i)$-regular  for every $i$.  
\begin{enumerate}
\item [$(i)$]  If each  $\cH_i$  is supported on a set of dimension $\le i$, then the homomorphism
\[  \HH{0}{\PP^r}{L_0(t) } \lra \HH{0}{\PP^r}{\FF(t)} \]
is surjective for $t \ge p$. 
\vskip 5pt
\item[$(ii)$] If each $\cH_i$ is supported on a set of dimension $\le i + 1$, then $\FF$ is $p$-regular. 
\end{enumerate}
 \end{lemma}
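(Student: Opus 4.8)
The plan is to cut the augmented complex into the standard short exact sequences of cycles, boundaries and homology, and then to push cohomological vanishing through them by a descending induction whose base case is that $\PP^r$ has cohomological dimension $r$. Write $Z_i = \ker(L_i \to L_{i-1})$ and $B_i = \Image(L_{i+1}\to L_i)$ for $i\ge 1$, set $Z_0=\ker(\eps)$ and $B_0 = \Image(L_1\to L_0)$, and adopt the convention $B_{-1}=\FF$. Since $\eps$ is surjective and the footnote fixes $\cH_0 = Z_0/B_0$, these sheaves are coherent and sit in two families of short exact sequences,
\[ 0 \lra Z_i \lra L_i \lra B_{i-1} \lra 0 \qquad (i \ge 0), \]
\[ 0 \lra B_i \lra Z_i \lra \cH_i \lra 0 \qquad (i \ge 0), \]
the case $i=0$ of the first being $0 \to Z_0 \to L_0 \to \FF \to 0$.

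Two inputs drive the induction. First, the $(p+i)$-regularity of $L_i$ gives $\HH{k}{\PP^r}{L_i(l)}=0$ whenever $k\ge 1$ and $l \ge (p+i)-k$. Second, Grothendieck vanishing applied to the support hypothesis gives $\HH{k}{\PP^r}{\cH_i(l)}=0$ for all $l$ once $k > \dim\text{Supp}(\cH_i)$, that is for $k>i$ in case $(i)$ and for $k>i+1$ in case $(ii)$. At the right cohomological degree the first sequence then lets one replace $B_{i-1}$ by $Z_i$, the $L_i$-term dropping out, and the second lets one replace $Z_i$ by $B_i$, the $\cH_i$-term dropping out. Iterating yields, for a fixed target twist $l$, an inclusion-then-surjection chain
\[ \HH{k}{\PP^r}{B_{i-1}(l)} \hookrightarrow \HH{k+1}{\PP^r}{Z_i(l)} \twoheadleftarrow \HH{k+1}{\PP^r}{B_i(l)}, \]
so that vanishing of the right-hand group forces vanishing of the left-hand one.

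For part $(ii)$ I would fix $j\ge 1$ and run this with $l = p-j$, starting from $\HH{j}{\PP^r}{\FF(p-j)} = \HH{j}{\PP^r}{B_{-1}(p-j)}$ and raising the cohomological degree by one at each step: the regularity input matches exactly because $p-j = (p+i)-(j+i)$, and the support input applies because $j+i+1 > i+1$. The chain terminates once the degree exceeds $r$, where every coherent sheaf has no cohomology, and descending back down gives $\HH{j}{\PP^r}{\FF(p-j)}=0$; as $j\ge 1$ was arbitrary this is exactly $p$-regularity of $\FF$. Part $(i)$ is the same argument shifted by one: with $l=t\ge p$ one shows $\HH{1}{\PP^r}{Z_0(t)}=0$ (here the weaker support bound $\dim\cH_i\le i$ is precisely what annihilates the relevant $\cH_i$-terms, and the regularity of $L_i$ gives $\HH{i}{\PP^r}{L_i(t)}=0$ for $t\ge p$), and feeding this into $0\to Z_0\to L_0\to\FF\to 0$ makes $\HH{0}{\PP^r}{L_0(t)}\to\HH{0}{\PP^r}{\FF(t)}$ surjective.

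The only real obstacle is the simultaneous bookkeeping of twist and cohomological degree, arranged so that at every stage both the regularity of $L_i$ and the support bound on $\cH_i$ bite. The one-unit gap between the two support hypotheses is exactly what separates the single $H^1$-vanishing behind surjectivity in $(i)$ from the full family of $H^j$-vanishings behind regularity in $(ii)$, and keeping the indices aligned through the descending induction is where care is required; convergence is a non-issue, since only the finitely many degrees $k\le r$ ever contribute.
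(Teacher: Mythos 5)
Your proof is correct and is precisely the argument the paper has in mind: the paper's own ``proof'' simply says the lemma follows by chopping $L_\bullet$ into short exact sequences and chasing the resulting diagram (citing \cite[B.1.2, B.1.3]{PAG}), and your proposal carries out exactly that chase, with the indices, twists, and support bounds correctly aligned. In particular your bookkeeping --- the injections coming from the $(p+i)$-regularity of $L_i$, the surjections coming from Grothendieck vanishing on $\mathrm{Supp}(\cH_i)$, and termination above cohomological degree $r$ --- is a faithful and complete write-up of the standard technique the authors invoke.
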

\begin{proof}
 This is established by chopping $L_\bullet$ into short exact sequences in the usual way and chasing through the resulting diagram. (Compare \cite[B.1.2, B.1.3]{PAG}, but note that the sheaf $\cH_0$ there should refer to the augmented complex, as above.)  \end{proof}

We conclude this section by observing that the same argument proves that  Castelnuovo--Mumford regularity of surfaces behaves submultiplicatively in powers. For curves, this has been known for some time \cite{Chandler}, \cite{Sidman}.
\begin{proposition}
Let $X \subseteq \PP^r$ be a reduced $($but possibly singular$)$ surface, and denote by $\II_X \subseteq \OO_{\PP^r}$ the ideal sheaf of $X$. If $\II_X$ is $m$-regular, then $\II_X^a$ is $am$-regular.
\end{proposition}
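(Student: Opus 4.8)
The plan is to run the same machinery used for Theorem \ref{Reduced.Curve.Theorem}, invoking part $(ii)$ of the preceding Lemma in place of part $(i)$. Starting from the $m$-regularity of $X$, I would take a resolution $U_\bullet$ of $\II_X$ as in \eqref{m-reg.Resoln.I} with $\reg(U_i) \le m+i$, and form the augmented Weyman complex $L_\bullet = S^a(U_\bullet) \to \II_X^a$. As in the proof of Theorem \ref{Reduced.Curve.Theorem}, the recipe \eqref{Weyman.Complex.2}--\eqref{Weyman.Complex.3} gives $\reg(L_i) \le am + i$, so the regularity hypothesis of that Lemma holds with $p = am$.

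The crux is to bound the supports of the homology sheaves $\cH_i = \cH_i(L_\bullet \to \II_X^a)$. By \eqref{Exact.Away.From.X.Equation} each $\cH_i$ is supported on $X$, which now has dimension $2$. Part $(ii)$ requires $\cH_i$ to be supported in dimension $\le i+1$: for $i \ge 1$ this is automatic, since $\dim X = 2 \le i+1$, so the only genuine condition is that $\cH_0$ be supported in dimension $\le 1$. Here reducedness enters. The local computation in the proof of Lemma \ref{Right.Hand.Exactness.Weyman.Complex} uses only that $X$ is smooth at the point in question --- at a smooth point $\II_X$ is generated by a regular sequence, and right-hand exactness of the symmetric-power complex holds there --- so $\cH_0$ is supported on the singular locus $\textnormal{Sing}(X)$. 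Since $X$ is a reduced surface, $\textnormal{Sing}(X)$ is a proper closed subset and hence has dimension $\le 1$. Applying part $(ii)$ with $p = am$ then shows that $\II_X^a$ is $am$-regular.

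I expect the only point needing care to be this $\cH_0$ analysis: one must confirm that Lemma \ref{Right.Hand.Exactness.Weyman.Complex} is genuinely a statement about smooth points, so that it localizes to the smooth locus of a globally singular $X$, and note that reducedness is exactly what forces $\dim \textnormal{Sing}(X) \le 1$. The remaining regularity bookkeeping is identical to the curve case, with the dimension thresholds shifted up by one to accommodate $\dim X = 2$.
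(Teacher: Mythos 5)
Your proposal is correct and is exactly the argument the paper intends: its proof is a one-line sketch ("argue as in the proof of Theorem \ref{Reduced.Curve.Theorem}, reducing to statement (ii) of the previous Lemma"), and you have filled in precisely those details --- the bound $\reg(L_i) \le am+i$ giving $p = am$, the support of $\cH_i$ on the two-dimensional set $X$ handling $i \ge 1$, and the localization of Lemma \ref{Right.Hand.Exactness.Weyman.Complex} to smooth points forcing $\cH_0$ to live on $\textnormal{Sing}(X)$, which has dimension $\le 1$ by reducedness. Nothing further is needed.
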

\begin{proof}[Sketch of Proof.] One argues just as in the  proof of Theorem \ref{Reduced.Curve.Theorem}, reducing to  statement (ii) of the previous Lemma. 
\end{proof}

\newcommand{\Schur}[2]{S^{{#1},1^{#2}}}

\section{Macaulay-type bounds: preliminaries}

This section is devoted to some preliminary results that will be used in the proof of Theorem \ref{Intro.Sat.Deg.Thm} from the Introduction. In the first subsection, we discuss symmetric powers of a Koszul complex. The second is devoted to the computation of some direct images from a blow-up. Finally \S  \ref{Vanishing.Theorem.Normal.Bundles.Subsection} gives the proof of Proposition \ref{NB.Vanishing.Prop} form the Introduction.

\subsection{Powers of Koszul complexes} \label{Powers.of.Koszul.Subsection}

In this subsection we review the construction of symmetric powers of a Koszul complex. In the local setting this (and much more) appears in the paper \cite{Buchsbaum.Eisenbud} of Buchsbaum and Eisenbud, and it was revisited by Srinivasan in \cite{Srinivasan}. However for the convenience of the reader we give here a quick sketch of the particular facts we require. We continue to  work  over the complex numbers.

Let $M$ be a smooth algebraic variety, and let $V$ be a vector bundle of rank $e$ on $M$. Fix integers $a, k \ge 1$. We denote by $\Schur{a}{k-1}(V)$ the Schur power of $V$ corresponding to the partition $(a, 1, \ldots, 1)$ ($k-1$ repetitions of $1$). It follows from Pieri's rule that
\begin{equation} \label{Schur.Equation}
\begin{aligned}\Schur{a}{k-1}(V) \ &= \ \ker \Big( \Lambda^{k-1}V \otimes S^a V \lra \Lambda^{k-2}V \otimes S^{a+1}V \Big) \\ &= \ \image \Big( \Lambda^k V \otimes S^{a-1}V \lra  \Lambda^{k-1} V \otimes  S^a V \Big). 
\end{aligned}
\end{equation}

\begin{remark} [Properties of $\Schur{a}{k-1}(V)$] \label{Properties.of.Schur.Power} We collect some useful observations concerning this Schur power.
\begin{enumerate}
\item [(i).] If $ k = 1$ then $\Schur{a}{k-1}(V)  = S^aV$, while if $a = 1$ then $\Schur{a}{k-1}(V)  = \Lambda^k V$. Moreover
\[   \Schur{a}{k-1}(V) \, = \, 0 \ \ \text{ when } k > \rk V. \]
\vskip 5pt
\item[(ii).] The bundle $\Schur{a}{k-1}(V)$ is actually a summand of $S^{a-1}V \otimes \Lambda^k V$. In fact,  Pieri shows that
\[
S^{a-1}V \otimes \Lambda^k V \ = \ \Schur{a}{k-1}(V)\,  \oplus \, \Schur{a-1}{k}(V).
\]
\vskip 5pt
\item[(iii).] If $L$ is a line bundle on $M$, then it follows from \eqref{Schur.Equation} or (ii) that 
\[ \Schur{a}{k-1} ( V \otimes L ) \ = \  \Schur{a}{k-1} ( V) \, \otimes \, L^{\otimes a + k -1}.\]
\item[(iv).] Suppose that $M = \PP^r$ and 
\[    V \ = \  \OO_{\PP^r}(-d_0 )  \oplus \, \ldots \, \oplus \,  \OO_{\PP^r}(-d_p) \]
with $\ d_0 \ge \ldots \ge d_p.$ Then it follows from (ii) that  $\Schur{a}{k-1}(V)$ is a direct sum of line bundles of degrees $\ge \, -(ad_0 + d_1 + \ldots + d_{k-1})$, and moreover a summand of this degree appears. In other words,
\[   \reg \big( \, \Schur{a}{k-1}(V) \, ) \ = \ ad_0 + d_1 + \ldots + d_{k-1}.\]
\end{enumerate}
\end{remark}

One can also realize $\Schur{a}{k-1}(V)$ geometrically, \`a la Kempf \cite{Kempf}.
\begin{lemma} \label{Kempf.Type.Lemma} 
Let $ \pi : \PP(V) \lra M $  be the projective bundle of one-dimensional quotients of $V$, and denote by $F$ the kernel of the canonical quotient $\pi^* V \lra \OO_{\PP(V)}(1)$, so that $F$ sits in the short exact sequence
\[
0 \lra F \lra \pi^* V \lra \OO_{\PP(V)}(1) \lra 0 \tag{*}
\]
of bundles on $\PP(V)$. Then
\[ \Schur{a}{k-1}(V) \ = \ \pi_* \Big( \, \Lambda^{k-1}F \otimes \OO_{\PP(V)}(a) \, \Big). \]
\end{lemma}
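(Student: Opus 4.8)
The plan is to induct on $k$, using the exterior-power filtration of the defining sequence $(*)$ together with Grothendieck's computation of direct images of line bundles on a projective bundle. The base case $k=1$ is immediate: since $\Lambda^0 F = \OO_{\PP(V)}$, one has $\pi_*\big(\OO_{\PP(V)}(a)\big) = S^a V = \Schur{a}{0}(V)$. For the inductive step I would begin from the standard short exact sequence obtained by taking the $(k-1)$st exterior power of $(*)$: because the quotient $\OO_{\PP(V)}(1)$ is a line bundle, $\Lambda^{k-1}(\pi^*V)$ carries a two-step filtration giving
\[ 0 \lra \Lambda^{k-1}F \lra \pi^*\Lambda^{k-1}V \lra \Lambda^{k-2}F \otimes \OO_{\PP(V)}(1) \lra 0. \]

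Twisting by $\OO_{\PP(V)}(a)$ and applying $\pi_*$, the projection formula identifies the middle direct image as $\Lambda^{k-1}V \otimes S^a V$. The key input is the vanishing $R^1\pi_*\big(\Lambda^{k-1}F \otimes \OO_{\PP(V)}(a)\big) = 0$ for $a \ge 1$: restricting to a fibre $\PP^{e-1}$ one has $F = \Omega^1(1)$, so $\Lambda^{k-1}F \otimes \OO(a) = \Omega^{k-1}_{\PP^{e-1}}(k-1+a)$, whose higher cohomology vanishes by Bott's formula since the twist $k-1+a$ exceeds the degree $k-1$. Cohomology and base change then yield a short exact sequence of bundles
\[ 0 \lra \pi_*\big(\Lambda^{k-1}F \otimes \OO_{\PP(V)}(a)\big) \lra \Lambda^{k-1}V \otimes S^a V \overset{\psi}{\lra} \pi_*\big(\Lambda^{k-2}F \otimes \OO_{\PP(V)}(a+1)\big) \lra 0, \]
and by the inductive hypothesis (applied with exponent $a+1 \ge 1$) the right-hand term is $\Schur{a+1}{k-2}(V)$. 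Thus $\pi_*\big(\Lambda^{k-1}F \otimes \OO_{\PP(V)}(a)\big) = \ker\psi$.

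It remains to identify $\ker\psi$ with $\Schur{a}{k-1}(V)$. Composing $\psi$ with the inclusion $\Schur{a+1}{k-2}(V) \hookrightarrow \Lambda^{k-2}V \otimes S^{a+1}V$ from \eqref{Schur.Equation} produces a map $\Lambda^{k-1}V \otimes S^a V \to \Lambda^{k-2}V \otimes S^{a+1}V$ with the same kernel as $\psi$, and I would argue this composite is a nonzero multiple of the Pieri differential. Since the entire construction is natural in $V$ and commutes with base change, it suffices to check this over a point, where $V$ is a vector space $W$ and all the maps are $\textnormal{GL}(W)$-equivariant. There the space $\textnormal{Hom}_{\textnormal{GL}(W)}\big(\Lambda^{k-1}W \otimes S^a W,\ \Lambda^{k-2}W \otimes S^{a+1}W\big)$ is one-dimensional by Pieri's rule, since the two sides share the single common constituent $\Schur{a+1}{k-2}(W)$; hence any nonzero equivariant map agrees with the Pieri map up to a scalar and so has kernel $\Schur{a}{k-1}(W)$ by the first line of \eqref{Schur.Equation}. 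As $\psi$ is surjective onto the nonzero bundle $\Schur{a+1}{k-2}(V)$ it is visibly nonzero, giving $\ker\psi = \Schur{a}{k-1}(V)$ and completing the induction. The degenerate ranges, where $\Schur{a+1}{k-2}(V)$ or $\Lambda^{k-1}V$ vanishes because $k > \rk V$, are handled trivially and are consistent with Remark \ref{Properties.of.Schur.Power}(i).

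The main obstacle is precisely this last identification of the direct-image map $\psi$: the pushforward exhibits $\pi_*\big(\Lambda^{k-1}F \otimes \OO_{\PP(V)}(a)\big)$ only as an abstract kernel, and one must pin $\psi$ down well enough to recognize that kernel. The naturality-plus-Schur-multiplicity argument above is what makes the step clean, letting one reduce to a representation-theoretic uniqueness statement over a point rather than computing the connecting map explicitly in local coordinates.
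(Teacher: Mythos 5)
Your proof is correct, but it follows a genuinely different route from the paper's. The paper's proof is non-inductive: from (*) it extracts the full Koszul-type resolution
\[
0 \lra \Lambda^{k-1} F \otimes \OO_{\PP(V)}(a) \lra \Lambda^{k-1} (\pi^* V) \otimes \OO_{\PP(V)}(a) \lra \Lambda^{k-2} (\pi^* V) \otimes \OO_{\PP(V)}(a+1) \lra \cdots
\]
whose terms after the first are \emph{pullbacks} twisted by powers of $\OO_{\PP(V)}(1)$. By the projection formula the pushed-forward differential is then visibly the natural map of \eqref{Schur.Equation}, and mere left-exactness of $\pi_*$ gives
\[
\pi_*\big(\Lambda^{k-1}F \otimes \OO_{\PP(V)}(a)\big) \ = \ \ker\Big( \Lambda^{k-1}V \otimes S^a V \lra \Lambda^{k-2}V \otimes S^{a+1}V \Big) \ = \ \Schur{a}{k-1}(V),
\]
with no induction, no higher direct images, and no representation theory. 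Your argument instead truncates to the two-step filtration, and the cost is exactly the obstacle you flag: the quotient $\Lambda^{k-2}F \otimes \OO_{\PP(V)}(1)$ is not a pullback, so the connecting map $\psi$ is only abstractly defined and must be identified by the equivariance/multiplicity-one argument. That identification is sound --- the two sides share only the constituent $\Schur{a+1}{k-2}(V)$, each with multiplicity one, so the equivariant Hom space is a line, and the reduction to a point is legitimate because your $R^1$-vanishing makes the pushforwards commute with base change --- but it is machinery the paper never needs. What your route buys in return is the vanishing $R^i\pi_*\big(\Lambda^{k-1}F\otimes\OO_{\PP(V)}(a)\big)=0$ for $i>0$ and $a \ge 1$, proved via Bott on the fibres where $F$ restricts to $\Omega^1_{\PP^{e-1}}(1)$; the Lemma itself does not require this, but the paper does invoke precisely such a vanishing, without separate proof, in establishing Proposition \ref{Power.Koszul.Complex.Proposition} (``Since the higher direct images of all the terms vanish, (*) pushes down to an exact complex''), so your byproduct is not wasted.
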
 
\begin{proof} In fact, (*) gives rise to a long exact sequence
\small
\[
0 \lra \Lambda^{k-1} F \otimes \OO_{\PP(V)}(a) \lra \Lambda^{k-1} (\pi^* V) \otimes \OO_{\PP(V)}(a) \lra \Lambda^{k-2} (\pi^* V) \otimes \OO_{\PP(V)}(a+1) \lra \ldots \ . 
\]
\normalsize
The assertion follows from \eqref{Schur.Equation} upon taking direct images.
\end{proof}
 
 Now suppose given a map of bundles
 \begin{equation} \label{cosection}
   \eps : V \lra \OO_M  \end{equation}
whose image is the ideal sheaf $\II \subseteq \OO_M$ of a subscheme $Z \subseteq X$:  equivalently, $\eps$ is dual to a section $\OO_M \lra V^*$ whose zero-scheme is $Z$.  We allow the possibility that $\eps$ is surjective, in which case $\II = \OO_M$ and $Z = \varnothing$.

If $Z$ has the expected codimension $e = \rk(V)$, then $\II$ is resolved by the Koszul complex associated to $\eps$. The following result of Buchsbaum and Eisenbud gives the  resolution of powers of  $\II$.
\begin{proposition} [{\cite[Theorem 3.1]{Buchsbaum.Eisenbud}, \cite[Theorem 2.1]{Srinivasan}}] \label{Power.Koszul.Complex.Proposition}
Fix $a \ge 1$. Then $\eps$ determines a complex 
\begin{equation} \label{Koszul.Complex.Power}
\xymatrix@C=30pt{
\ldots  \ar[r]  &\Schur{a}{2}(V) \ar[r] &S^{a,1}(V)  \ar[r]     &S^a V     \ar[r]^{S^a(\eps)}  & \II^a \ar[r]  & 0  
}
\end{equation}
of vector bundles on $M$.
This complex is exact provided that either $\eps$ is surjective, or that $Z$ has codimension $= \rk(V)$. 
\end{proposition}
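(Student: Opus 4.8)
The plan is to realize \eqref{Koszul.Complex.Power} as a direct image from the projective bundle $\pi : \PP(V) \to M$ of Lemma \ref{Kempf.Type.Lemma}, and to deduce its exactness by pushing forward a genuine Koszul complex. First I would pull $\eps$ back to $\PP(V)$ and restrict it along the inclusion $F \hookrightarrow \pi^*V$ of the tautological subbundle, producing a cosection
\[ \sigma \ : \ F \lra \OO_{\PP(V)}, \qquad \sigma \ = \ (\pi^*\eps)\big|_F, \]
equivalently a section of $F^*$. Contraction against $\sigma$ makes $\Lambda^\bullet F$ into a Koszul complex of bundles on $\PP(V)$. Twisting by $\OO_{\PP(V)}(a)$ and applying $\pi_*$, Lemma \ref{Kempf.Type.Lemma} identifies $\pi_*\big(\Lambda^{k-1}F \otimes \OO_{\PP(V)}(a)\big) = \Schur{a}{k-1}(V)$ and $\pi_*\OO_{\PP(V)}(a) = S^aV$, so the push-forward is exactly a complex of the form \eqref{Koszul.Complex.Power}. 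Being the image of an honest complex under a functor, it is automatically a complex -- no composition of consecutive differentials needs to be checked by hand -- and a local computation in a trivialization $V = \OO_M^{\,e}$, $\eps = (g_1, \ldots, g_e)$, identifies the last map with $S^a(\eps)$ and its image with $\II^a = (g_1, \ldots, g_e)^a$.

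It then remains to prove that \eqref{Koszul.Complex.Power} is exact, and this I would split into two assertions: that the Koszul complex of $\sigma$ is exact on $\PP(V)$, and that $\pi_*$ preserves that exactness. For the first, recall that the Koszul complex of a section of a bundle of rank $e-1 = \rk F$ resolves the structure sheaf of its zero locus exactly when that locus has codimension $e-1$. Now over a point $x \notin Z$ the functional $\eps_x$ is nonzero, and $\sigma$ vanishes at $(x, [q]) \in \PP(V_x)$ precisely when $\ker q \subseteq \ker \eps_x$, i.e. only at the single point $[\eps_x]$; over a point $x \in Z$ one has $\eps_x = 0$, so $\sigma$ vanishes on all of $\pi^{-1}(x)$. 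Thus the support of $Z(\sigma)$ is the union of the section $\overline{\{(x,[\eps_x]) : x \notin Z\}}$, of codimension $e-1$, with the fiber locus $\pi^{-1}(Z)$. When $\eps$ is surjective, $Z = \varnothing$ and $Z(\sigma)$ is just the section, so the complex is exact. When instead $\codim Z = e = \rk V$, the set $\pi^{-1}(Z)$ has dimension $\dim M - 1$; but $Z(\sigma)$ is locally cut out by the $e-1$ components of $\sigma$, so by Krull's height theorem none of its components can have codimension exceeding $e-1$. This forces $\pi^{-1}(Z)$ to lie inside the closure of the section, whence $Z(\sigma)$ is again of pure codimension $e-1$ and the Koszul complex is exact.

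For the second assertion it suffices that $R^i\pi_*\big(\Lambda^{k-1}F \otimes \OO_{\PP(V)}(a)\big) = 0$ for all $i > 0$: on each fiber $\PP^{e-1}$ the restriction of $F$ is the universal subbundle, isomorphic to $\Omega^1(1)$, so $\Lambda^{k-1}F \otimes \OO(a)$ restricts to $\Omega^{k-1}(k-1+a)$, and since $a \ge 1$ Bott's formula kills all higher cohomology, whence the higher direct images vanish by base change. Granting this, one chops the bounded exact complex $\Lambda^\bullet F \otimes \OO_{\PP(V)}(a)$ into short exact sequences and dimension-shifts: the vanishing of the higher direct images of every term forces that of every syzygy sheaf, so $\pi_*$ stays exact on each short exact sequence and hence on the whole complex. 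This yields the exactness of \eqref{Koszul.Complex.Power} in both cases. (Alternatively one could run the original local argument of Buchsbaum--Eisenbud over $\OO_{M,x}$, but the bundle computation makes the regularity bookkeeping of Remark \ref{Properties.of.Schur.Power} most transparent.)

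I expect the main obstacle to be precisely the codimension count for $Z(\sigma)$ when $\codim Z = e$: one must rule out the fiber locus $\pi^{-1}(Z)$ appearing as an extra component of the wrong dimension, which would destroy the exactness of the Koszul complex upstairs. The Krull-height argument above is what saves it, and it is the one place where the hypothesis on the codimension of $Z$ is genuinely used; the surjective case is merely the degenerate instance $Z = \varnothing$ of the same picture.
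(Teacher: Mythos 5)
Your construction coincides with the paper's: they too pull $\eps$ back to $\pi : \PP(V) \to M$, restrict along $F \hookrightarrow \pi^*V$ to get a cosection $\tilde\eps = \sigma$, twist its Koszul complex by $\OO_{\PP(V)}(a)$, and push forward using Lemma \ref{Kempf.Type.Lemma}. Your two supporting computations are correct and in fact flesh out points the paper leaves implicit: the Krull-height argument showing $Z(\sigma)$ has pure codimension $e-1 = \rk F$ (so that $\sigma$ is a regular section, $\PP(V)$ being smooth hence Cohen--Macaulay), and the Bott-formula argument on fibers giving $R^i\pi_*\big(\Lambda^{k-1}F \otimes \OO_{\PP(V)}(a)\big) = 0$ for $i > 0$.

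There is, however, a genuine gap at the augmentation, i.e.\ at exactness in the $S^aV$ spot. What your push-forward argument actually proves is exactness of
\[
\ldots \lra \Schur{a}{1}(V) \lra S^aV \lra \pi_*\big(\OO_{Z(\sigma)}(a)\big) \lra 0,
\]
so the image of $\Schur{a}{1}(V) \to S^aV$ is $\pi_*\big(\II_{Z(\sigma)}(a)\big)$, the sections of $\OO_{\PP(V)}(a)$ vanishing scheme-theoretically on $Z(\sigma)$. The complex \eqref{Koszul.Complex.Power} is instead augmented by $S^a(\eps) : S^aV \to \II^a$, whose kernel consists of the degree-$a$ relations among $g_1, \ldots, g_e$. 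Functoriality (the complex property) gives only the inclusion $\pi_*\big(\II_{Z(\sigma)}(a)\big) \subseteq \ker S^a(\eps)$; exactness of \eqref{Koszul.Complex.Power} requires the reverse inclusion, equivalently that the induced surjection $\pi_*\OO_{Z(\sigma)}(a) \to \II^a$ be an isomorphism. This is exactly the step where the paper identifies $Z(\tilde\eps)$ scheme-theoretically with $\PP(\II)$ and invokes Fulton--Lang, Theorem IV.2.2, for $\pi_* \OO_{\PP(\II)}(a) = \II^a$; unwound, it is the classical fact that an ideal locally generated by a regular sequence is of linear type, so that the linear Koszul relations $g_i t_j - g_j t_i$ generate the full ideal of the blow-up $\Bl_{\II}(M)$ inside $\PP(V)$ and $\Sym^a$ of $\II$ maps isomorphically to $\II^a$. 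Your ``local computation in a trivialization'' as stated only identifies the image of $S^a(\eps)$ with $\II^a$ --- a tautology --- and does not show that an element of $\ker S^a(\eps)$ restricts to zero on the scheme $Z(\sigma)$; nothing you have established implies this. (In the surjective case $Z = \varnothing$ the identification is immediate, since $Z(\sigma)$ is then the section and $\OO_{\PP(V)}(1)|_{Z(\sigma)} \cong \OO_M$ via $\eps$; the gap concerns precisely the case $\codim Z = \rk V$, and it is where the regular-sequence hypothesis is used a second time, beyond your codimension count.)
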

\noi Observe from \ref{Properties.of.Schur.Power} (i) that this complex has the same length as the Koszul complex of $\eps$. 

\begin{proof} Returning to the setting of Lemma \ref{Kempf.Type.Lemma}, denote by $\tilde{\eps} : F \lra \OO_{\PP(V)}$ the composition of the inclusion $F \hookrightarrow \pi^*V$ with $\pi^*\eps : \pi^* V \lra \pi^* \OO_M$. The zero-locus of $\tilde{\eps}$ defines the natural embedding of $\PP(\II)$ in $\PP(V)$. Now consider the Koszul complex of $\tilde \eps$. After twisting by $\OO_{\PP(V)}(a)$ this has the form:
\[
\ldots \lra \Lambda^2 F \otimes \OO_{\PP(V)}(a) \lra F \otimes \OO_{\PP(V)}(a) \lra \OO_{\PP(V)}(a) \lra \OO_{\PP(\II)}(a) \lra 0.  \tag{*}
\]
In view of Lemma  \ref{Kempf.Type.Lemma}, \eqref{Koszul.Complex.Power} arises by taking direct images. If $\eps$ is surjective, or defines a regular section of $V^*$, then the Koszul complex (*) is exact. Since the higher direct images of all the terms vanish, (*) pushes down to an exact complex. Furthermore, in this case $\pi_* \OO_{\PP(\II)}(a) = \II^a$ (cf \cite[Theorem IV.2.2]{Fulton.Lang}), and the exactness of \eqref{Koszul.Complex.Power} follows. \end{proof}

\begin{example} [Macaulay's Theorem]
Suppose as in the Introduction that $f_0, \ldots, f_p \in    \CC[x_0, \ldots, x_r]$ are homogeneous polynomials of degrees $d_0 \ge \ldots \ge d_p$ that generate a finite colength ideal $J$. This gives rise to a surjective map
\[   V \ = \ \oplus \, \OO_{\PP^r} (-d_i) \lra \OO_{\PP^r} \lra 0\]
of bundles on projective space. Keeping in mind Remark \ref{Properties.of.Schur.Power} (iv), Macaulay's statements \eqref{Macaulay.Eqn.1}
 and \eqref{Macaulay.Eqn.2}
follow by looking at the cohomology of the resulting complex \eqref{Koszul.Complex.Power}. When $p = r$ this complex has length $r+1$, so one can also read off the non-surjectivity of 
\[  \HH{0}{\PP^r}{S^a V (t)} \lra  \HH{0}{\PP^r}{\OO_{\PP^r}(t)} \]
when $t < ad_0+ d_1 + \ldots + d_r -r$. 
\end{example}

\begin{example} [Complete intersection ideals] Suppose that $Z\subseteq \PP^r$ is a complete intersection of dimension $\ge 0$. Applying Theorem \ref{Koszul.Complex.Power}
to the Koszul resolution of its homogeneous ideal $I_Z$, one sees that $I_Z^a$ is saturated for every $a\ge 1$. This is a result of Zariski.
\end{example}

\subsection{Push-forwards from a blowing up}
\label{Pushfowards.from.Blowup.Subsection}

We compute here the direct images of multiples of the exceptional divisor under the blowing-up of a smooth subvariety.

Consider then a smooth  variety $M$ and a non--singular subvariety $X \subseteq M$ having codimension $e \ge 2$ and ideal sheaf $\II = \II_X \subseteq \OO_M$. We consider the blowing-up 
\[  \mu : M^\pr = \Bl_X(M) \lra  M \]
of $M$ along $X$.  Write $\EEE \subseteq  M^\pr$ for the exceptional divisor of $M^\pr$, so that $\II \cdot \OO_{M^\pr} = \OO_{M^\pr}(-\EEE)$. 
We recall that if $a>0$ then
\begin{equation} \label{BU.Eqn.1}
\mu_* \OO_{M^\pr}(-a\EEE) \ = \ \II^a \ \ \text{and } \ \ R^j \mu_* \OO_{M^\pr}(-a\EEE) \, = \, 0 \ \text{for } j > 0.
\end{equation}

The following Proposition gives the analogous computation for positive multiples of $\EEE$.
\begin{proposition} \label{Blowup.Pushforward.Proposition}
Fix $a > 0$. Then
\begin{equation} \label{Pushforward.Ext.Equation}
R^j \mu_* \OO_{M^\pr}(a\EEE) \ = \ \shext^j_{\OO_M}\Big( \II^{a-e+1} \, , \, \OO_M \Big).\footnote{When $0 < a < e-1$ we take $\II^{a-e+1} = \OO_M$.}
\end{equation}
In particular, $\mu_* \OO_{M^\pr}(a\EEE) = \OO_M$, $R^j \mu_* \OO_{M^\pr}(a\EEE) = 0$ if $j \ne 0, e-1$, and
\[
R^{e-1}\mu_* \OO_{M^\pr}(a\EEE) \ = \  \shext^{e-1}_{\OO_M}\big( \II^{a-e+1} \, , \, \OO_M \big). \]
\end{proposition}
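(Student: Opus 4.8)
The plan is to deduce \eqref{Pushforward.Ext.Equation} from Grothendieck--Serre duality for the proper birational morphism $\mu \colon M^\pr \to M$, exploiting the fact that $\mu$ is a map of smooth varieties of the same dimension whose relative dualizing sheaf is the line bundle $\omega_{M^\pr/M} = \OO_{M^\pr}\big((e-1)\EEE\big)$; this is the usual adjunction formula $K_{M^\pr} = \mu^* K_M + (e-1)\EEE$ for the blow-up of a non-singular center of codimension $e$. Before invoking duality I would record the auxiliary vanishing that $R\mu_* \OO_{M^\pr}(b\EEE) = \OO_M$, concentrated in degree zero, for every $0 \le b \le e-1$. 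This follows by induction on $b$ from the sequences
\[ 0 \lra \OO_{M^\pr}\big((b-1)\EEE\big) \lra \OO_{M^\pr}(b\EEE) \lra \OO_{\EEE}(b\EEE) \lra 0, \]
the base case $b=0$ being the equality $R\mu_*\OO_{M^\pr} = \OO_M$ (rationality of the singularities of $\Bl_X M$). Writing $p \colon \EEE \to X$ for the exceptional $\PP^{e-1}$-bundle and $N = N_{X/M}$, the sheaf $\OO_{\EEE}(b\EEE)$ restricts on each fibre of $p$ to $\OO_{\PP^{e-1}}(-b)$, so that $R^jp_*\OO_{\EEE}(b\EEE) = 0$ for all $j$ once $1 \le b \le e-1$; hence each of the sequences above leaves $R\mu_*$ unchanged and the induction goes through.

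Next I would apply relative duality to the line bundle $\mathcal{F} = \OO_{M^\pr}\big(-(a-e+1)\EEE\big)$. Since $R\shom_{\OO_{M^\pr}}(\mathcal{F}, \omega_{M^\pr/M}) = \mathcal{F}^{-1} \otimes \OO_{M^\pr}\big((e-1)\EEE\big) = \OO_{M^\pr}(a\EEE)$, Grothendieck duality yields
\[ R\mu_* \OO_{M^\pr}(a\EEE) \ \cong \ R\shom_{\OO_M}\Big( R\mu_* \OO_{M^\pr}\big(-(a-e+1)\EEE\big) \, , \, \OO_M \Big). \]
Set $m = a-e+1$. When $a \ge e-1$ one has $m \ge 0$ and $R\mu_*\OO_{M^\pr}(-m\EEE) = \II^{m}$ in degree zero by \eqref{BU.Eqn.1} (with $\II^0 = \OO_M$); when $1 \le a \le e-2$ one has $-m = e-1-a \in [1,e-1]$ and the auxiliary step gives $R\mu_*\OO_{M^\pr}(-m\EEE) = \OO_M$, in agreement with the convention $\II^{a-e+1} = \OO_M$ of the footnote. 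In every case the right-hand argument is the single sheaf $\II^{a-e+1}$ placed in degree zero, and taking cohomology of the right-hand side produces $R^j\mu_*\OO_{M^\pr}(a\EEE) = \shext^j_{\OO_M}(\II^{a-e+1}, \OO_M)$, which is \eqref{Pushforward.Ext.Equation}.

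For the ``in particular'' assertions I would analyze these $\shext$ sheaves. Because $X$ is smooth of codimension $e$, the ideal $\II$ is locally generated by a regular sequence of length $e$, and for every $m \ge 1$ the quotient $\OO_M/\II^{m}$ is Cohen--Macaulay of codimension $e$, since it carries a finite filtration whose graded pieces $\II^j/\II^{j+1} \cong S^j(\II/\II^2)$ are locally free over the regular ring $\OO_X$. Consequently $\shext^i_{\OO_M}(\OO_M/\II^{m}, \OO_M) = 0$ for $i \ne e$. The exact sequence $0 \to \II^{m} \to \OO_M \to \OO_M/\II^{m} \to 0$ then identifies $\shext^j(\II^{m},\OO_M) \cong \shext^{j+1}(\OO_M/\II^{m},\OO_M)$ for $j \ge 1$ and gives $\shom(\II^{m},\OO_M) = \OO_M$ (as $\II^{m}$ agrees with $\OO_M$ away from the codimension-$\ge 2$ set $X$, and $\OO_M$ is reflexive). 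Hence $\shext^0 = \OO_M$, $\shext^j = 0$ for $1 \le j \le e-2$, and the only surviving higher term sits in degree $e-1$, which is exactly the displayed refinement.

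The step I expect to be the main obstacle is the clean and correctly normalized invocation of relative duality: pinning down $\omega_{M^\pr/M} = \OO_{M^\pr}((e-1)\EEE)$ and confirming that the relative dualizing complex is this line bundle in degree zero (legitimate since $\mu$ has relative dimension zero), and in particular handling the small range $1 \le a \le e-2$, where the naive reading of the formula would demand a negative power of $\II$. The auxiliary vanishing $R\mu_*\OO_{M^\pr}(b\EEE)=\OO_M$ for $0\le b\le e-1$ is precisely what reconciles that range with the footnote convention, so establishing it carefully is the crux; the remaining $\shext$ bookkeeping is routine once Cohen--Macaulayness of $\OO_M/\II^{m}$ is in hand.
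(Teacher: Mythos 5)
Your proof is correct and takes essentially the same approach as the paper: duality for $\mu$ (with $\omega_\mu = \OO_{M^\pr}\big((e-1)\EEE\big)$) applied to exactly the same sheaf $\FF = \OO_{M^\pr}\big((e-1-a)\EEE\big)$, whose total pushforward is $\II^{a-e+1}$. The two places where you supply extra detail --- the inductive computation of $R\mu_*\OO_{M^\pr}(b\EEE)$ for $0 \le b \le e-1$, and the Cohen--Macaulay filtration argument for the vanishing of $\shext^j_{\OO_M}(\II^{a-e+1},\OO_M)$ when $j \ne 0, e-1$ --- are precisely the steps the paper dispatches as a ``direct computation when $0 < a < e-1$'' and as ``perfection of powers of the ideal of a smooth variety'' (via Proposition \ref{Power.Koszul.Complex.Proposition}).
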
 

\begin{proof}[Proof of Proposition \ref{Blowup.Pushforward.Proposition}]
This is a consequence of duality for $\mu$, which asserts that
\[  
R\mu_* \, R\,\shom_{\OO_{M^\pr}} \big( \FF \, , \, \omega_\mu \, \big) \ = \  R\, \shom_{\OO_M} \big ( \, R\mu_* \FF \, , \OO_M \, \big) \tag{*}
\]
for any sheaf $\FF$ on $M^\pr$, where $\omega_\mu$ denotes the relative dualizing sheaf for $\mu$ (\cite[(3.19) on page 86]{Huybrechts}).   We apply this with 
\[  \FF \ = \ \OO_{M^\pr}\big( \, (e-1-a)\EEE\, ). \]
Then 
$R \mu_* \FF =  \II^{a-e+1}$
thanks to  \eqref{BU.Eqn.1} (and a direct computation when  $0 < a < e-1$), and $\omega_\mu = \OO_{M^\pr}\big( (e-1)\EEE \big)$. Therefore the first assertion of the Proposition follows from (*). The vanishing of $\shext^j_{\OO_M}(\II^{a-e+1}, \OO_M)$ for $j \ne 0, e-1$ follows from the perfection of powers of the ideal of a smooth variety (which in turn is a consequence eg of  Proposition \ref{Power.Koszul.Complex.Proposition}). 
\end{proof}

\begin{remark} [Generalization to multiplier ideal sheaves] Let $\frb \subseteq \OO_M$ be an arbitrary ideal sheaf, and let $\mu : M^\pr \lra M$ be a log resolution of $\frb$, with $\frb \cdot \OO_{M^\pr} = \OO_{M^\pr}(-\EEE)$. A completely parallel argument shows that for $a > 0$:
\[
R^j \mu_* \OO_{M^\pr}(a\EEE) \ = \ \shext^j_{\OO_M}\big( \MI{\frb^a} \, , \, \OO_M \big),\]
where $\MI{\frb^a}$ is the multiplier ideal of $\frb^a$. The formula \eqref{Pushforward.Ext.Equation} is a special case of this. 
\end{remark}

\begin{corollary} \label{Filtration.of.Push.Forwards}
Continuing to work in characteristic zero, fix  $a \ge 1$ and   denote by $N = N_{X/M}$ the normal bundle to $X$ in $M$.  If $a \le e-1$, then 
\[ R^{e-1} \mu_* \, \OO_{M^\pr}(a\EEE)  \ = \  0. \]    If $a \ge e$, then
$
R^{e-1} \mu_* \, \OO_{M^\pr}(a\EEE)  
$
has a filtration with successive quotients
\[
S^k N \otimes \det N \ \ \text{ for }   \ 0 \, \le \, k \, \le \, a-e.
\]
\end{corollary}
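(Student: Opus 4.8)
The plan is to read off everything from the identification
\[ R^{e-1}\mu_* \OO_{M^\pr}(a\EEE) \ = \ \shext^{e-1}_{\OO_M}\big( \II^{a-e+1}, \OO_M\big) \]
furnished by Proposition \ref{Blowup.Pushforward.Proposition}, which reduces the Corollary to a computation of this $\shext$ sheaf. When $1 \le a \le e-1$ the convention $\II^{a-e+1} = \OO_M$ applies, and since $e-1 \ge 1$ one has $\shext^{e-1}_{\OO_M}(\OO_M, \OO_M) = 0$; this settles the first assertion immediately.

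So suppose $a \ge e$ and set $b = a - e + 1 \ge 1$. First I would apply $\shext^{\bullet}_{\OO_M}(-, \OO_M)$ to the short exact sequence $0 \to \II^b \to \OO_M \to \OO_M/\II^b \to 0$. Because $\shext^j_{\OO_M}(\OO_M, \OO_M)$ vanishes for $j > 0$, the long exact sequence yields an isomorphism $\shext^{e-1}_{\OO_M}(\II^b, \OO_M) \cong \shext^e_{\OO_M}(\OO_M/\II^b, \OO_M)$, so it suffices to compute the top $\shext$ of the thickening $\OO_M/\II^b$.

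Next I would exploit the $\II$-adic filtration $\II^b \subseteq \II^{b-1} \subseteq \cdots \subseteq \II \subseteq \OO_M$, which filters $\OO_M/\II^b$ with successive quotients $\II^k/\II^{k+1}$ for $0 \le k \le b-1$. Since $X$ is smooth, the embedding $i : X \hookrightarrow M$ is regular and each graded piece is $\II^k/\II^{k+1} = S^k N^*$, a locally free sheaf on $X$, where $N^* = \II/\II^2$ is the conormal bundle. The key local computation is then that for a locally free sheaf $\mathcal{E}$ on $X$, regarded as an $\OO_M$-module via $i$, one has $\shext^j_{\OO_M}(\mathcal{E}, \OO_M) = 0$ for $j \ne e$ and $\shext^e_{\OO_M}(\mathcal{E}, \OO_M) = \mathcal{E}^\vee \otimes \det N$; this follows by dualizing the Koszul resolution of $\OO_X$ over $\OO_M$ (equivalently, from duality for the regular embedding $i$). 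Applying it to $\mathcal{E} = S^k N^*$ gives, in characteristic zero, $\shext^e_{\OO_M}(S^k N^*, \OO_M) = S^k N \otimes \det N$, using $(S^k N^*)^\vee = S^k N$.

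Finally, because each $S^k N^*$ has only its $e$-th $\shext$ nonzero, the long exact sequences attached to the filtration of $\OO_M/\II^b$ degenerate into short exact sequences of $\shext^e$ sheaves, and an easy induction shows that every stage of the filtration again has vanishing $\shext^j$ for $j \ne e$. This equips $\shext^e_{\OO_M}(\OO_M/\II^b, \OO_M)$, hence $R^{e-1}\mu_*\OO_{M^\pr}(a\EEE)$, with a filtration whose successive quotients are $\shext^e_{\OO_M}(S^k N^*, \OO_M) = S^k N \otimes \det N$ for $0 \le k \le b-1 = a-e$, exactly as claimed. The only content beyond bookkeeping is the local duality identity for $\shext^e_{\OO_M}(\mathcal{E}, \OO_M)$ together with the vanishing of the lower $\shext$'s; this is where smoothness of $X$ (regularity of the embedding) is essential, and I expect it to be the main technical point to pin down carefully.
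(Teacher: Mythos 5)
Your proof is correct and follows essentially the same route as the paper: both reduce via Proposition \ref{Blowup.Pushforward.Proposition} to computing $\shext^{e-1}_{\OO_M}\big(\II^{a-e+1},\OO_M\big)$, invoke the local duality fact that $\shext^{e}_{\OO_M}(E,\OO_M)=E^*\otimes\det N$ for locally free $E$ on $X$ (with all other $\shext^j$ vanishing), and run the $\II$-adic filtration with graded pieces $S^kN^*$ together with the characteristic-zero identification $(S^kN^*)^*=S^kN$. The only immaterial difference is that the paper dualizes the sequences $0\to\II^{k+1}\to\II^k\to S^kN^*\to 0$ directly, whereas you first trade $\shext^{e-1}$ of the ideal power for $\shext^{e}$ of the thickening $\OO_M/\II^{a-e+1}$ before filtering.
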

\begin{proof} The first statement follows directly from the previous Proposition. For the second, 
recall first that if $E$ is any locally free $\OO_X$-module, then  -- $X$ being non-singular of codimension $e$ in $M$ -- 
\[   \shext_{\OO_M}^{e} \big ( \, E \, , \, \OO_M \, \big) \ = \ E^* \otimes \det  N, \]
while all the other $\shext^j$ vanish. The claim then follows from Proposition \ref{Pushforward.Ext.Equation}
 using the exact sequences
\[ 0 \lra \II^{k+1} \lra \II^k \lra S^k N^* \lra 0 \]
together with the isomorphism $\big (S^k(N^*)\big)^* = S^k N$ valid in characteristic zero.
 \end{proof}

\begin{remark}
Recalling that $\EEE = \PP(N^*)$, one can inductively prove the Corollary directly, circumventing Proposition \ref{Pushforward.Ext.Equation}, by pushing forward the exact sequences
\[   0 \lra  \OO_{M^\pr}\big((k-1)\EEE\big) \lra  \OO_{M^\pr}\big(k 
\EEE\big) \lra \OO_{\EEE}(k\EEE)\lra 0. \]
However it seemed to us that the Proposition may be of independent interest. 
\end{remark}

\subsection{A vanishing theorem for normal bundles}
\label{Vanishing.Theorem.Normal.Bundles.Subsection}

This final subsection is devoted to the proof of 
\begin{proposition} \label{Van.Thm.NB.Subsection.Statement}
Let $X \subseteq \PP^r$ be a smooth complex projective variety of dimension $n$, and denote by $N = N_{X/\PP^r}$ the normal bundle to $X$. Then
\[ \HHH{i}{X}{S^kN \otimes \det N \otimes \OO_X(\ell)} \ = \ 0 
\]
for all $i > 0$,  $k\ge 0$ and $\ell \ge -r$. 
\end{proposition}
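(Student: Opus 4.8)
The plan is to deduce the Proposition from Kodaira--Nakano (or Kawamata--Viehweg) vanishing after two preliminary reductions. First I would pin down $\det N$. Combining the normal bundle sequence $0 \to T_X \to T_{\PP^r}|_X \to N \to 0$ with the Euler sequence gives $\det \big(T_{\PP^r}|_X\big) = \OO_X(r+1)$, whence
\[
\det N \ = \ \det\big(T_{\PP^r}|_X\big) \otimes \omega_X \ = \ \OO_X(r+1) \otimes \omega_X .
\]
Therefore $\det N \otimes \OO_X(\ell) = \omega_X \otimes \OO_X(r+1+\ell)$, and setting $m = r+1+\ell \geq 1$ the assertion becomes the Griffiths-type statement
\[
H^i\big(X,\, S^k N \otimes \omega_X \otimes \OO_X(m)\big) \ = \ 0 \qquad (i>0,\ m\geq 1,\ k\geq 0).
\]
Second, I would record the positivity of $N$. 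Since $N$ is a quotient of $T_{\PP^r}|_X$, the restriction of the ample bundle $T_{\PP^r}$, it is itself ample; twisting the two sequences by $\OO_X(-1)$ exhibits $N(-1)$ as a quotient of $\OO_X^{\oplus(r+1)}$, so $N(-1)$ is globally generated.

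Next I would pass to the projective bundle $\pi : \PP(N) \to X$ of one-dimensional quotients, with tautological bundle $\xi = \OO_{\PP(N)}(1)$, so that $\pi_* \xi^k = S^k N$ and $R^{>0}\pi_* \xi^k = 0$ for $k \geq 0$. By the Leray spectral sequence the displayed vanishing is equivalent to $H^i\big(\PP(N),\, \xi^k \otimes \pi^*(\omega_X \otimes \OO_X(m))\big) = 0$. Writing $e = \codim X$ and using the relative canonical formula $\omega_{\PP(N)/X} = \xi^{-e} \otimes \pi^* \det N$, the line bundle in question is $K_{\PP(N)} \otimes A$, where
\[
A \ = \ \xi^{k+e} \otimes \pi^*\big(\OO_X(m) \otimes \det N^{-1}\big).
\]
Thus it suffices to prove $H^i\big(\PP(N),\, K_{\PP(N)} \otimes A\big) = 0$ for $i>0$, which will follow from Kawamata--Viehweg vanishing once $A$ is shown to be nef and big.

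Establishing nef-and-bigness of $A$ is the heart of the matter, and I expect it to be the main obstacle. Ampleness of $N$ gives that $\xi$ is ample, so $A$ is positive on curves contained in fibers of $\pi$; global generation of $N(-1)$ gives that $\eta := \xi \otimes \pi^* \OO_X(-1)$ is nef, and this is what supplies the horizontal positivity (together with the relation $\det N = \omega_X \otimes \OO_X(r+1)$ and the hypothesis $m \geq 1$) needed to control curves dominating their image in $X$; bigness then reduces to the positivity $A^{\dim \PP(N)} > 0$, which comes down to positivity of the Segre classes of the ample bundle $N$. I would stress here that $A$ need \emph{not} be ample --- one sees already for complete intersections that $A$ can be a proper power of an $\OO_{\PP(N)}(1)$ attached to a globally generated but non-ample bundle --- so Kodaira vanishing does not apply directly and the passage through Kawamata--Viehweg is essential.

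Finally, closer to the phrase ``Kodaira--Nakano vanishing on $X$'', there is an alternative that sidesteps the global positivity estimate: rather than proving nef-and-bigness of $A$ on $\PP(N)$, one can unwind the relative cotangent sequence of $\pi : \PP(N) \to X$ in the manner of Le Potier, splitting $H^i\big(\PP(N), K_{\PP(N)} \otimes A\big)$ via the Leray filtration into terms of the shape $H^q\big(X,\, \Omega_X^p \otimes (\text{positive line bundle})\big)$, where the positive twist is built from $\omega_X \otimes \OO_X(m)$ and the fibrewise-ample $\xi$. These are killed by Kodaira--Nakano vanishing on $X$ itself. I would present whichever of the two routes turns out to be shorter, but I anticipate that in either case the single genuinely delicate point is converting the ampleness of $N$ (and global generation of $N(-1)$) into the precise positivity needed to cancel the $\det N^{-1}$ twist.
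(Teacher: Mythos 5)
Your preliminary reductions are all correct: the identity $\det N \otimes \OO_X(\ell) = \omega_X \otimes \OO_X(m)$ with $m = \ell + r + 1 \ge 1$, the ampleness of $N$ and global generation of $N(-1)$, and the Leray identification of the desired vanishing with $H^i\big(\PP(N), K_{\PP(N)} \otimes A\big) = 0$ for $A = \xi^{k+e} \otimes \pi^*\big(\OO_X(m) \otimes \det N^{-1}\big)$. But the claim on which your main route rests is false: $A$ is in general \emph{not nef}, so Kawamata--Viehweg cannot be applied. Take $X \subseteq \PP^r$ ($r \ge 3$) to be a smooth complete intersection of type $(3,1)$, i.e.\ a cubic hypersurface inside a hyperplane, so that $N = \OO_X(3) \oplus \OO_X(1)$, $e = 2$, $\det N = \OO_X(4)$. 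The quotient $N \twoheadrightarrow \OO_X(1)$ defines a section $\sigma : X \to \PP(N)$ with $\sigma^* \xi = \OO_X(1)$, and hence
\[
\sigma^* A \ = \ \OO_X(k + e + m - 4) \ = \ \OO_X(k + m - 2),
\]
which for $k = 0$, $m = 1$ (that is, $\ell = -r$, exactly the boundary case the Proposition must cover) equals $\OO_X(-1)$, negative on every curve in $X$. Your proposed ``horizontal positivity'' cannot rescue this: $\sigma^*\eta = \OO_X$, so the nef bundle $\eta$ contributes nothing along $\sigma(X)$, while the genuinely negative factor $\pi^* \det N^{-1}$ survives. (Your parenthetical intuition from complete intersections is what misleads you: for complete intersections of \emph{equal} degrees $A$ is actually ample, but for unequal degrees it is not even nef.) Since the vanishing is nonetheless true, the conclusion is that it is not a consequence of positivity of the single line bundle $A$ at all; any successful argument must use finer structure than the ampleness of $N$ and global generation of $N(-1)$ --- namely, what the kernel of $T_{\PP^r}|_X \to N$ is.

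That kernel, the tangent bundle $TX$, is exactly what the paper's proof exploits, and it is also the way to make your vague fall-back precise --- though not via the Leray filtration, which for a line bundle with vanishing higher direct images degenerates to a single row and produces no $\Omega_X^p$-terms. One works on $X$ itself: the normal bundle sequence $0 \to TX \to T_{\PP^r}|_X \to N \to 0$ induces an exact Koszul-type complex
\[
\cdots \lra S^{k-2}T_{\PP^r}|_X \otimes \Lambda^2 TX \lra S^{k-1}T_{\PP^r}|_X \otimes TX \lra S^k T_{\PP^r}|_X \lra S^k N \lra 0.
\]
Twisting through by $\det N \otimes \OO_X(\ell) = \omega_X \otimes \OO_X(\ell + r + 1)$ and chasing, the Proposition follows once one shows
\[
H^i\Big(X, \, S^{k-j} T_{\PP^r}|_X \otimes \Lambda^j TX \otimes \omega_X \otimes \OO_X(\ell + r + 1)\Big) \ = \ 0 \quad \text{for } i \ge j+1 ;
\]
resolving $S^{k-j}T_{\PP^r}|_X$ by sums of line bundles via the symmetric power of the Euler sequence reduces this to $H^i\big(X, \Lambda^j TX \otimes \omega_X \otimes \OO_X(\ell_1)\big) = 0$ for $i \ge j+1$ and $\ell_1 > 0$, and since $\Lambda^j TX \otimes \omega_X = \Omega_X^{n-j}$ this is precisely Nakano vanishing, the relevant indices satisfying $(n-j) + i \ge n+1 > n$. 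This is where the tangent bundle enters irreplaceably, and it is the idea your proposal is missing: you should abandon the $\PP(N)$/Kawamata--Viehweg route and develop this resolution argument instead.
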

\noi Here $\OO_X(k)$ denotes $\OO_{\PP^r}(k)|X$. We remark that similar statements were established by Schneider and Zintl in \cite{Schneider.Zintl}, but this particular vanishing does not seem to appear  there. Other vanishings for normal bundles played a central role in \cite{SAD}. 

\begin{proof} [Proof of Proposition \ref{Van.Thm.NB.Subsection.Statement}]
We use the abbreviation $\PP = \PP^r$. Starting from the  exact sequence $ 0 \lra TX \lra T\PP|X \lra N \lra 0$, we get a long exact sequence
\[ \ldots \lra S^{k-2}T\PP|X \otimes \Lambda^2 TX \lra S^{k-1}T\PP|X \otimes TX  \lra S^k T\PP|X \lra S^k N \lra 0. \tag{*}  \]
By adjunction, $\det N \otimes \OO_X(\ell) = \omega_X \otimes \OO_X(\ell+r+1)$.  So after twisting through by $\det N \otimes \OO_X(\ell)$ in (*), we see that the Proposition will follow if we prove:
\[
\HHH{i}{X}{S^{k-j}T\PP|X \otimes  \Lambda^j TX \otimes \omega_X \otimes \OO_X(\ell+ r + 1))} \ = \ 0 \  \ \ \text{for } \ i \, \ge \, j + 1 \tag{**}
\]
when $0 \le j \le k$ and  $\ell \ge -r$. It follows from the Euler sequence that $S^m T\PP|X$ has a presentation of the form
\[  0 \lra \oplus \, \OO_X(m-1) \lra \oplus \, \OO_X(m) \lra S^m T\PP |X \lra 0, \]
so for (**) it suffices in turn to verify that
\[ \HH{i}{X}{\Lambda^j TX \otimes \omega_X \otimes \OO_X(\ell_1)} \ = \ 0 \] for $i \ge j + 1$ and $\ell_1 > 0$. But 
$ \Lambda^j TX \otimes \omega_X = \Omega^{n-j}_X$,
so finally we're asking that
\[ \HH{i}{X}{\Omega^{n-j}_X \otimes \OO_X(\ell_1)} \ = \ 0 \ \ \text{for } \ i \ge j + 1  \ \text{and }   \ell _1 >0,\]
and this follows from  Nakano vanishing.   \end{proof}

\section{Proof of Theorem \ref{Intro.Sat.Deg.Thm} }

We now turn to the proof of Theorem \ref{Intro.Sat.Deg.Thm}
 from the Introduction.
 
Consider then a non-singular variety $X \subseteq  \PP^r$ that is cut out  as a scheme by hypersurfaces of degrees $d_0 \ge \ldots \ge d_p$. Equivalently, we are given a surjective homomorphism of sheaves:
\[ \eps : U \lra \II_X \ \ \text{,} \ \ U \ = \ \oplus \, \OO_{\PP^r}(-d_i). \]
Let 
$\mu : \PP^\pr = \Bl_X(\PP^r) \lra \PP^r  $ be the blowing up of $X$, with exceptional divisor $\EEE \subseteq \PP^\pr$, so that $\II_X \cdot \OO_{\PP^\pr} = \OO_{\PP^\pr} (- \EEE)$. Write $H$ for the pull-back to $\PP^\pr$ of the hyperplane class on $\PP^r$, and set $U^\pr = \mu^* U$. Thus on $\PP^\pr$ we have a surjective map of bundles:
\begin{equation} \label{Map.of.Bundles.on.Blowup}
\eps^\pr : U^\pr \lra \OO_{\PP^\pr}(-\EEE).
\end{equation}
Noting that
\[  \HHH{0}{\PP^\pr}{\OO_{\PP^\pr}( tH - a \EEE)} \ = \  \HHH{0}{\PP^r}{\II_X^a \otimes\OO_{\PP^r}(t)},
\]
one sees as in Lemma \ref{Surjectivity.Suffices.Lemma} that the question is to prove the surjectivity of 
\begin{equation} \label{Surjectivity.Required.for.Thm.A}
\HHH{0}{\PP^\pr}{S^aU^\pr\otimes \OO_{\PP^\pr}(tH)} \lra \HHH{0}{\PP^\pr}{\OO_{\PP^\pr}(tH- a \EEE)} 
\end{equation}
for $t \ge ad_0 + d_1 + \ldots + d_r - r$. 

To this end, we pass to the Buchsbaum--Eisenbud complex \eqref{Koszul.Complex.Power} constructed from \[ U^\pr \otimes \OO_{\PP^\pr}(\EEE)   \overset{\eps^\pr} \lra \OO_{\PP^\pr} \lra 0. \]
Twisting through by $\OO_{\PP^\pr}(t H - a\EEE)$, we arrive at a long exact sequence of vector bundles on $\PP^\pr$ having the form:

\vskip -10pt
\small
\begin{equation} \label{Big.Complex.on.Blowup}
\xymatrix@C=9.5pt@R=12pt{
\ldots \ar[r] &\Schur{a}{2} U^\pr \otimes \OO_{\PP^\pr}(t H + 2\EEE) \ar[r] \ar@{=}[d]&S^{a,1} U^\pr \otimes \OO_{\PP^\pr}(t H+ \EEE) \ar[r]\ar@{=}[d] & S^aU^\pr \otimes \OO_{\PP^\pr}(t H) \ar[r] \ar@{=}[d] &\OO_{\PP^\pr}(t H - a \EEE) \ar[r] &0. \\ & C_2 & C_1 & C_0
}
\end{equation}
\normalsize
With indexing as indicated, the $i^{\text{th}}$ term of this sequence is given by
\[
C_i \ = \ \Schur{a}{i}(U^\pr)   \otimes  \OO_{\PP^\pr}(tH + i \EEE). 
\]

In  order to establish the surjectivity \eqref{Surjectivity.Required.for.Thm.A} it suffices upon chasing through \eqref{Big.Complex.on.Blowup} to prove that
\begin{equation} \label{Vanishing.Required.for.Thm.A}
\HH{i}{\PP^\pr}{C_i} \ = \ 0 \ \ \text{ for } \ 1 \le i \le r
\end{equation}
provided that $t \ge ad_0 + d_1 + \ldots + d_r -r$. But now recall (Remark \ref{Properties.of.Schur.Power}) that if $i \le r$ then $\Schur{a}{i}(U^\pr)$ is a sum of line bundles $\OO_{\PP^\pr}(mH)$ with 
\[
m  \, \ge \, -ad_0 - d_i - \ldots - d_i \ge -ad_0 - d_1 - \ldots - d_r.
\]
Hence when $t \ge ad_0 + d_1 + \ldots + d_r -r$, $C_i$ is a sum terms of the form
\[  
\OO_{\PP^\pr}(\ell H + i \EEE)  \ \ \text{with } \ell \ge -r.
\]
Therefore \eqref{Vanishing.Required.for.Thm.A}
 -- and with it Theorem \ref{Intro.Sat.Deg.Thm} -- is a consequence of
\begin{proposition}
If $\ell \ge -r$, then
\[  \HH{i}{\PP^\pr}{\OO_{\PP^\pr}{(\ell H + i \EEE)} } \ = \ 0 \ \text{  for \ } i > 0.\]
\end{proposition}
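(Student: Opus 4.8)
The plan is to compute the cohomology by pushing down to $\PP^r$ via the Leray spectral sequence for the blow-down $\mu : \PP^\pr \lra \PP^r$, feeding in the direct-image computations of \S\ref{Pushfowards.from.Blowup.Subsection} together with the normal-bundle vanishing of Proposition \ref{Van.Thm.NB.Subsection.Statement}. Write $e = \codim X$. Since $\OO_{\PP^\pr}(\ell H) = \mu^* \OO_{\PP^r}(\ell)$, the projection formula gives
\[ R^q\mu_* \OO_{\PP^\pr}(\ell H + i \EEE) \ = \ \OO_{\PP^r}(\ell) \otimes R^q\mu_* \OO_{\PP^\pr}(i\EEE), \]
so the Leray spectral sequence reads
\[ E_2^{p,q} \ = \ \HH{p}{\PP^r}{\OO_{\PP^r}(\ell) \otimes R^q\mu_* \OO_{\PP^\pr}(i\EEE)} \ \Longrightarrow \ \HH{p+q}{\PP^\pr}{\OO_{\PP^\pr}(\ell H + i \EEE)}. \]
By Proposition \ref{Blowup.Pushforward.Proposition} the higher direct images $R^q\mu_* \OO_{\PP^\pr}(i\EEE)$ are nonzero only for $q = 0$ and $q = e-1$. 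Hence for $i > 0$ it suffices to show that the two terms $E_2^{i,0}$ and $E_2^{i-e+1,\,e-1}$ both vanish.

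For the row $q = 0$ we have $R^0\mu_* \OO_{\PP^\pr}(i\EEE) = \OO_{\PP^r}$ by Proposition \ref{Blowup.Pushforward.Proposition}, so the term in question is $\HH{i}{\PP^r}{\OO_{\PP^r}(\ell)}$. This vanishes for every $i > 0$: it is automatic when $0 < i < r$, while for $i = r$ Serre duality identifies it with $H^0\big(\PP^r, \OO_{\PP^r}(-\ell - r - 1)\big)^*$, which is zero precisely because $\ell \ge -r$. This is the only place the hypothesis $\ell \ge -r$ enters on the base.

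For the row $q = e-1$ the relevant term is $\HH{i-e+1}{\PP^r}{\OO_{\PP^r}(\ell) \otimes R^{e-1}\mu_* \OO_{\PP^\pr}(i\EEE)}$, which can only contribute when $i \ge e-1$. When $i = e-1$ the sheaf $R^{e-1}\mu_* \OO_{\PP^\pr}\big((e-1)\EEE\big)$ already vanishes by the first assertion of Corollary \ref{Filtration.of.Push.Forwards}. When $i \ge e$, the same Corollary supplies a filtration of $R^{e-1}\mu_* \OO_{\PP^\pr}(i\EEE)$ with successive quotients $S^k N \otimes \det N$ for $0 \le k \le i-e$; these are sheaves supported on $X$, so after tensoring by $\OO_{\PP^r}(\ell)$ and chasing the long exact cohomology sequences of the filtration, the term in question is controlled by the groups $\HHH{i-e+1}{X}{S^k N \otimes \det N \otimes \OO_X(\ell)}$. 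Since $i - e + 1 \ge 1$ and $\ell \ge -r$, every one of these vanishes by Proposition \ref{Van.Thm.NB.Subsection.Statement}, so $E_2^{i-e+1,\,e-1} = 0$. As both contributing $E_2$-terms vanish, the abutment $\HH{i}{\PP^\pr}{\OO_{\PP^\pr}(\ell H + i\EEE)}$ is zero for $i > 0$.

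The crux of the argument — and the reason the entire machinery of \S\ref{Pushfowards.from.Blowup.Subsection} is needed — is the synchronization between the exceptional twist $i\EEE$ and the cohomological degree $i$: it is exactly this matching that pushes the $q = e-1$ strand of the spectral sequence into the positive degree $i-e+1$ on $X$, where the normal-bundle vanishing theorem takes effect. I do not anticipate a genuine obstacle beyond correctly assembling these inputs, since the two a priori nonzero rows of the spectral sequence are annihilated, respectively, by elementary vanishing on $\PP^r$ and by Proposition \ref{Van.Thm.NB.Subsection.Statement}; the only points demanding a little care are the boundary case $i = e-1$ and the verification that $\ell \ge -r$ is the precise threshold making the $q=0$ row vanish in top degree.
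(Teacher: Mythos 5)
Your proof is correct and follows essentially the same route as the paper's: the Leray spectral sequence for $\mu$, the identification of the only nonzero direct images ($q=0$ and $q=e-1$) via Proposition \ref{Blowup.Pushforward.Proposition}, elementary vanishing of $\HH{i}{\PP^r}{\OO_{\PP^r}(\ell)}$ for $\ell \ge -r$, and the filtration of $R^{e-1}\mu_*$ from Corollary \ref{Filtration.of.Push.Forwards} combined with the normal-bundle vanishing of Proposition \ref{Van.Thm.NB.Subsection.Statement}. Your treatment is in fact slightly more explicit than the paper's (the projection formula, the Serre-duality check at $i=r$, and the boundary case $i=e-1$ are spelled out rather than left implicit), but it is the same argument.
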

\begin{proof}
Thanks to the Leray spectral sequence, it suffices to show:
\[
\HH{j}{\PP^r}{R^k \mu_* \OO_{\PP^\pr}(\ell H + i \EEE)} \ = \ 0 \ \ \text{when  } j + k = i > 0. \tag{*}
\]
For $ k = 0$, observe that $\mu_* \OO_{\PP^\pr}(\ell H + i \EEE) = \OO_{\PP^r}(\ell)$, and these sheaves have no higher cohomology when $\ell \ge -r$. On the other hand, by Proposition \ref{Pushforward.Ext.Equation} the only non-vanishing higher direct images are the $R^{e-1}\mu_* \OO_{\PP^\pr}(\ell H + i \EEE)$, which do not appear when $i \le e-1$. So (*) holds when $j = 0, k = e-1$. It remains to consider the case  $k = e-1$ and $i \ge e$, so $j = i - (e-1) > 0$.  
Here Corollary \ref{Filtration.of.Push.Forwards}
 implies that the $R^{e-1}$ have a filtration with quotients
 \[
 S^\alpha N \otimes \det N \otimes \OO_X(\ell),
 \]
 where as above $N = N_{X/\PP^r}$ is the normal bundle to $X$ in $\PP^r$. But since we are assuming $\ell \ge -r$,  Proposition \ref{Van.Thm.NB.Subsection.Statement} guarantees that these sheaves have vanishing higher cohomology. This completes the proof. \end{proof}

\begin{remark}\label{Few.Equations}
Observe that if $X$ is defined by $p < r$ equations, then the argument just completed goes through taking
  $  d_{p+1} = \ldots =  d_r = 0.$
  \end{remark}

 %
 %
 %
 %

 \end{document}